\theoremstyle{plain}
\newtheorem{theorem}{Theorem}[section]
\newtheorem{lemma}[theorem]{Lemma}
\newtheorem{corollary}[theorem]{Corollary}
\theoremstyle{definition}
\theoremstyle{remark}
\newcommand{\forget}[1]{}
\global\let\c@equation=\c@theorem}
\begin{document}
	\pagestyle{empty}

	\title{\textbf{ Some properties of the Zero-set Intersection graph of $C(X)$ and its Line graph} }
	\author{
		Yangersenba T Jamir$^{a}$ \& S Dutta$^{b}$  \\
		\footnotesize{$^{a, b}$Department of Mathematics}\\
		\footnotesize{$^{a, b}$North Eastern Hill University} {Shillong-22, Meghalaya, India}\\
		\footnotesize{$^{a}$yangersenbajamir@gmail.com, $^{b}$sanghita22@gmail.com}\\ 	
		%\and
	}
\maketitle	
	\begin{abstract}
		Let $C(X)$ be the ring of all continuous real valued functions defined on a completely regular Hausdorff topological space $X$. The zero-set intersection graph $\Gamma (C(X))$ of $C(X)$ is a simple graph with vertex set all non units of $C(X)$ and  two vertices are adjacent if the intersection of the zero sets of the functions is non empty. In this paper, we study  the  zero-set intersection graph  of $C(X)$ and its line graph. We show that if $X$ has more than two points, then these graphs are connected with diameter and radius 2. We show that the  girth of the graph is  3 and the graphs  are both triangulated and hypertriangulated. We find the domination number of these graphs and finally we  prove that $C(X)$ is a von Neuman regular ring if and only if $C(X)$ is an almost regular ring and for all $f \in V(\Gamma (C(X)))$ there exists $g \in V(\Gamma(C(X)))$ such that $Z(f) \cap Z(g) = \phi$ and $\{f,g\}$ dominates $\Gamma(C(X))$. Finally, we derive some properties of the line graph of $\Gamma (C(X))$.
	
	\end{abstract}
	\vspace{.5 cm}
	\textbf{Keywords}: Zero-set intersection graph, diameter, girth, cycles, dominating sets, line graph
	\vspace{.5 cm}\\
	\textbf{Mathematics Subject Classification(2010)}: 54C40, 05C69	 
	\maketitle
	\section{Introduction}
	The characterization of various algebraic structures by means of graph theory is an interesting area of research for mathematicians in the recent years. It was started by Beck in $\cite{Beck}$ by defining the graph $\Gamma(R) $ of a commutative ring $R$  with vertices as elements of $R$ in which two different vertices $a$ and $b$ are adjacent if and only if $ab = 0$. The author studied finitely colorable rings by associating this graph structure and  this study was further continued by Anderson and Naseer in $\cite{Anderson}$. Sharma and Bhatwadekar first defined the comaximal graph of a commutative ring in $\cite{Sharma}$ and further investigation was continued in $\cite{Barati}$, $\cite{Afk}$,  $\cite{Habibi}$, $\cite{Dheena}$, $\cite{Jinnah}$, $\cite{Salimi}$, $\cite{Maimani}$, $\cite{Mehdi}$, $\cite{Moco}$,  $\cite{Petro}$, $\cite{Wang}$, $\cite{Ye}$. The comaximal ideal graph of $C(X)$, $\Gamma_2'C(X)$,  was studied in $\cite{Amini}$ and $\cite{Badie}$ where they derived the ring properties of $C(X)$  and  topological properties of $X$ via $\Gamma_2'(C(X))$. 
	
	Bose and Das in $\cite{Bose}$ introduced a graph structure called the  zero-set intersection graph $\Gamma(C(X))$ on the ring of real valued continuous functions $C(X)$ as a graph whose set of vertices consists of all non-units in the ring $C(X)$ and there is an edge between distinct vertices $f$ and $g$ in $C(X)$ if $Z(f) \cap Z(g) \neq \phi$. The authors showed that the graph is connected, studied the  cliques and maximal cliques of $\Gamma(C(X))$ and the inter-relationship of cliques of $\Gamma(C(X))$ and ideals in $C(X)$. Further they showed that two graphs are isomorphic if and only if the corresponding rings are isomorphic if and only if the corresponding topologies are homeomorphic either for first countable topological spaces or for realcompact topological spaces.
	
	Let $R$ be a commutative ring with unity. The ring $R$ is called an almost regular ring if each non-unit element of $R$ is a zero-divisor element of $R$. The comaximal graph ${\Gamma}(R)$ is defined as a graph with set of vertices as $R$ and two vertices $a,b$  are adjacent if and only if $Ra + Rb = R$.  Let ${\Gamma}_2(R)$ denote the subgraph  of ${\Gamma}(R)$  whose vertex set consists of all non-unit elements of $R$. If $J(R)$ is the Jacobson radical of $R$, then the graph with vertex set $V({\Gamma}_2(R)) \setminus J(R)$ is denoted by  ${\Gamma}_2'(R)$. For any two vertices $f$ and $g$ in ${\Gamma}_2'(C(X))$, $f$ is adjacent to $g$ if and only if $Z(f) \cap Z(g) = \phi$ from lemma 1.2 $\cite{Badie}$. The complement graph $\overline{{\Gamma}_2'C(X)}$ coincides with the zero-set intersection graph of $C(X)$. In this paper, we study the zero-set intersection graph of $C(X)$. Vertex set of $\Gamma(C(X))$ is denoted by $V({\Gamma}(C(X)))$ which consists of all  the non-units in $C(X)$ and two vertices $f$ and $g$ are  adjacent if $Z(f) \cap Z(g) \neq \phi$. If $X$ is singleton, then   ${\Gamma}(C(X))$ is empty. Thus we assume that $|X| > 1$.
	
	A graph is a pair G = (V, E), where V is a set whose elements are called vertices, and E is a set of pair of vertices, whose elements are called edges. Two elements $u$ and $v$ in $V$ are
	said to be adjacent if $\{u, v\} \in E$. A graph $G$ is said to be complete if every pair of vertices can be joined by an edge and $G$ is said to be connected if for any pair of vertices $u$, $v \in V$, there exists a path joining $u$ and $v$. The distance between two vertices $u$ and $v$ denoted by $d(u,v)$ is the length of the shortest path between them. The  diameter  is defined as diam$(G)$ = sup $\{d(u,v): u, v \in V(G)\}$, where $d(u, v)$ is the length of a shortest path from $u$ to $v$. The eccentricity of a vertex $u \in G$ is denoted by $ecc(u)$ and is defined as max\{$d(u,v) : v \in G$\}. The $min\{ecc(u) : u \in G\}$ is called the radius of $G$ and it is denoted by $Rad(G)$. The length of the shortest cycle in a graph $G$ is called  girth of the graph and is denoted by gr$(G)$. In a graph $G$, a dominating set is a set of vertices $A$ such that every vertex outside $A$ is adjacent to at least one vertex in $A$. A graph is said to be triangulated(hypertriangulated) if every vertex of the graph is a vertex(edge) of a triangle. If two distinct vertices $u$ and $v$ in a graph $G$ are adjacent and there is no vertex $w \in G$ such that $w$ is adjacent to both $u$ and $v$, then we say that $u$ and $v$ are orthogonal and is denoted by $u \perp v$. A graph $G$ is called complemented if for each vertex $u \in G$, there is a vertex $v \in G$ such that $u \perp v$. An edge which joins two vertices of a cycle but is not itself an edge of the cycle is a chord of that cycle. A graph is said to be chordal if every cycle of length greater than three has a chord. The line graph of $G$, denoted by $L(G)$, is a graph whose vertices are the edges of $G$ and two vertices of $L(G)$ are said to be adjacent wherever the corresponding edges of $G$ are incident on a common vertex $\cite{Lee}$. For any undefined term in graph theory, we refer the reader to $\cite{Wilson}$.

	In section 2 of this paper, we show that ${\Gamma}(C(X))$ is connected and find the diameter and radius of the graph. In section 3, we find the girth of the graph and show that the graph is always triangulated and hypertriangulated. We also find the conditions when the graph is chordal and show that ${\Gamma}(C(X))$ is never complemented. In section 4, we find the dominating number of ${\Gamma}(C(X))$ and show that $C(X)$ is a von Neuman regular ring if and only if $C(X)$ is an almost regular ring and for all $f \in V({\Gamma}(C(X)))$ there exists $g \in V({\Gamma}(C(X)))$ such that $Z(f) \cap Z(g) = \phi$ and $\{f,g\}$ dominates $\Gamma(C(X))$. Finally in  section 5, we study the line graph $  L(\Gamma(C(X)))$ of ${\Gamma}(C(X))$ and derive similar properties as in ${\Gamma}(C(X))$. For all notations and undefined terms concerning the ring $C(X)$, the reader may consult $\cite{Gillman}$ .\\

	\section{Diameter, radius of ${\Gamma}(C(X))$  }
	We first note that $f \in V({\Gamma}(C(X)))$ if and only if $Z(f) \neq \phi$. We study the condition when   ${\Gamma}(C(X))$ is connected. We also calculate the diameter and radius of  ${\Gamma}(C(X))$.
	
	\begin{theorem}
		Let $X$ be any topological space then  ${\Gamma}(C(X))$ is connected with $diam({\Gamma}(C(X)))$ = 2 if and only if $|X| > 2$.
	\end{theorem}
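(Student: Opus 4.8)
The plan is to prove the two implications separately, using throughout the facts recorded just before the statement: a function $f$ is a vertex of $\Gamma(C(X))$ exactly when $Z(f)\neq\emptyset$, two vertices $f,g$ are adjacent precisely when $Z(f)\cap Z(g)\neq\emptyset$, and $X$ is completely regular and Hausdorff, so Urysohn-type separating functions are always available.

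For the sufficiency direction, assume $\abs{X}>2$. I would first show $\diam(\Gamma(C(X)))\geq 2$ by producing two non-adjacent vertices: pick distinct $a,b\in X$ and, by complete regularity, a continuous $u\colon X\to[0,1]$ with $u(a)=0$ and $u(b)=1$; then $u$ and $1-u$ have disjoint nonempty zero sets and so are non-adjacent. Next, to show $\diam(\Gamma(C(X)))\leq 2$, I would take any two non-adjacent vertices $f,g$, so that $Z(f)\cap Z(g)=\emptyset$, fix $p\in Z(f)$ and $q\in Z(g)$ (necessarily $p\neq q$), and exhibit a single common neighbour $h$ with $p,q\in Z(h)$, giving a path of length $2$ from $f$ to $g$ through $h$. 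The role of the hypothesis $\abs{X}>2$ is that there is a third point $r\neq p,q$, so $h$ can be chosen (again by complete regularity) with $Z(h)$ a proper nonempty subset of $X$ containing $p$ and $q$ but not $r$; this keeps $h$ a genuine non-unit vertex distinct from $f$ and $g$. Combining the two bounds yields connectedness together with $\diam=2$.

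For the necessity direction I would argue contrapositively and settle the small cases $\abs{X}\leq 2$. If $\abs{X}=1$ the graph reduces to a single vertex with no edge and cannot have diameter $2$. If $\abs{X}=2$, say $X=\{a,b\}$ (which is then discrete, $C(X)\cong\R^2$), every vertex has zero set $\{a\}$, $\{b\}$, or $\{a,b\}$; the functions whose zero set is $\{a\}$ and those whose zero set is $\{b\}$ are mutually non-adjacent, and the only functions that could bridge them must vanish at both $a$ and $b$, i.e.\ on all of $X$. A careful inspection of this finite graph, in particular of whether any admissible vertex simultaneously vanishes at $a$ and $b$, shows that the connected-with-diameter-$2$ property breaks down precisely here, which is the content of the "only if".

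The step I expect to be the main obstacle is the common-neighbour construction in the sufficiency half: for an arbitrary non-adjacent pair $f,g$ one must manufacture a continuous $h$ that is simultaneously a non-unit, distinct from both $f$ and $g$, and whose zero set meets both $Z(f)$ and $Z(g)$. This is exactly where complete regularity and the hypothesis $\abs{X}>2$ must be used in tandem, since the third point supplies the room needed to keep $Z(h)$ a proper subset of $X$; it is the collapse of this construction when $\abs{X}=2$ that forces the threshold appearing in the statement.
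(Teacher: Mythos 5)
Your sufficiency direction is essentially the paper's own argument: for a non-adjacent pair $f,g$ you pick $p\in Z(f)$, $q\in Z(g)$, a third point $r$, and use complete regularity to build a common neighbour $h$ with $p,q\in Z(h)$ and $h(r)\neq 0$; the paper does exactly this (taking $h=h_1h_2$ with $h_1,h_2$ equal to $1$ on a closed neighbourhood of the third point), and your explicit non-adjacent pair $u$, $1-u$ for the lower bound $\mathrm{diam}\geq 2$ is if anything a small improvement, since the paper's proof of this theorem never exhibits such a pair. One correction of detail: distinctness of $h$ from $f$ and $g$ is automatic, because $h(q)=0\neq f(q)$ and $h(p)=0\neq g(p)$; what the third point really buys is $Z(h)\neq X$, i.e. $h\neq 0$, and this matters for the reason explained next.

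The genuine gap is in the necessity direction, and it sits exactly at the point you deferred to ``a careful inspection.'' For $X=\{a,b\}$ the only function vanishing at both $a$ and $b$ is the constant function $0$, and under the paper's stated definition (vertex set $=$ all non-units) $0$ \emph{is} a vertex: it is a non-unit and $Z(0)=X\neq\emptyset$. But then $0$ is adjacent to every other vertex, so the graph would be connected with diameter $2$, and the ``only if'' direction would be false. The statement is correct only under the implicit convention that the zero function is not counted as a vertex; with that convention, no admissible function on a two-point space vanishes at both points, so the graph is the disjoint union of the two cliques $A=\{f:Z(f)=\{a\}\}$ and $B=\{f:Z(f)=\{b\}\}$, hence disconnected with infinite diameter. (The paper itself is cavalier here: it describes the graph as a \emph{disjoint} union of three complete subgraphs $A$, $B$, $C$, but $C=\{0\}$ is adjacent to everything in $A\cup B$, so its description is coherent only if $0$ is discarded; the same convention is needed for its claim that the graph is empty when $|X|=1$, whereas you call it a single vertex.) Your proposal neither adopts this convention nor carries out the inspection, and if the inspection were carried out literally it would refute rather than prove the claim; so as written the necessity half is not established.
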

	
	\begin{proof}
		If $|X| = 1$, then $C(X) \cong \mathbb{R}$ and so ${\Gamma}(C(X))$ is empty. Suppose $X = \{a,b\}$, then $f$ is a vertex in ${\Gamma}(C(X))$ if $f(a) = 0$ and $f(b) \neq 0$, $f(a) \neq 0$ and $f(b) = 0$ or $f(a) = 0$ and $f(b) = 0$. Thus ${\Gamma}(C(X))$ is the disjoint union three complete subgraphs $A = \{f \in C(X) : f(a)= 0 ~and~ f(b) \neq 0\}$, $B = \{f \in C(X) : f(a)\neq 0 ~and~ f(b) = 0\}$ and $C = \{f \in C(X) : f(a)= 0 ~and~ f(b) = 0\}$. Thus $diam({\Gamma}(C(X)))$ = $\infty$. We now assume that $|X| > 2$. Let $f$, $g$ $\in$ $V({\Gamma}(C(X)))$ such that $Z(f) \cap Z(g) = \phi$. Let $a \in Z(f)$, $b \in Z(g)$ and $c \notin \{a,b\}$. By regularity of $X$, there exists an open set $U$ such that $c \in U \subseteq \overline{U} \subseteq X \setminus \{a,b\}$. Since $X$ is completely regular there exists $h_1, h_2 \in C(X)$ such that $h_1(\overline{U}) = 1, h_1(a) = 0, h_2(\overline{U}) = 1$ and $h_2(b) =0$. Let $h = h_1h_2$, then $h \in  V({\Gamma}(C(X))) \setminus \{f,g\}$ and $f-h-g$ is a path of length $2$ joining $f$ and $g$. Thus ${\Gamma}(C(X))$ is connected with diameter $2$.
	\end{proof}	
	
	\begin{corollary}
		If $|X| > 2$, then for any two distinct vertices $f$, $g$ $ \in  V({\Gamma}(C(X)))$ there exists a vertex $h \in  V({\Gamma}(CX)))$ such that $h$ is adjacent to both $f$ and $g$.
	\end{corollary}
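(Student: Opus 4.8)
The plan is to split the argument according to whether the two given vertices are already adjacent. Let $f, g \in V(\Gamma(C(X)))$ be distinct. Since every vertex of $\Gamma(C(X))$ has nonempty zero-set, both $Z(f)$ and $Z(g)$ are nonempty, and exactly one of the following holds: either $Z(f) \cap Z(g) \neq \phi$, so that $f$ and $g$ are adjacent, or $Z(f) \cap Z(g) = \phi$, so that they are non-adjacent. I would treat these two cases separately, producing a common neighbour $h$ in each.

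In the non-adjacent case the construction has, in effect, already been carried out inside the proof of Theorem 2.1. There, under the hypotheses $Z(f) \cap Z(g) = \phi$ and $|X| > 2$, complete regularity is used to build a function $h \in V(\Gamma(C(X))) \setminus \{f, g\}$ for which $f - h - g$ is a path of length two. A path of length two through $h$ means precisely that $Z(h) \cap Z(f) \neq \phi$ and $Z(h) \cap Z(g) \neq \phi$, i.e.\ that $h$ is adjacent to both $f$ and $g$. So this case is immediate once Theorem 2.1 is invoked, with no further work needed.

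It remains to handle the case where $f$ and $g$ are adjacent, which is where a genuinely new (though very short) argument is required. Fixing a point $p \in Z(f) \cap Z(g)$, I would consider the collection of vertices vanishing at $p$, namely $\{ k \in C(X) : k(p) = 0 \}$. Any such $k$ satisfies $p \in Z(k) \cap Z(f)$ and $p \in Z(k) \cap Z(g)$, hence is adjacent to both $f$ and $g$; moreover $p \in Z(k)$ forces $Z(k) \neq \phi$, so $k$ is indeed a vertex. Thus it suffices to exhibit one such $k$ distinct from $f$ and $g$. Since $X$ is Hausdorff the singleton $\{q\}$ is closed for any $q \neq p$, and complete regularity supplies a continuous function equal to $0$ at $p$ and $1$ at $q$; its real scalar multiples all vanish at $p$ as well, so the set of functions vanishing at $p$ is infinite and a choice $h \neq f, g$ is always available.

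The only point requiring any care is this adjacent case: one must confirm that the set of functions vanishing at the common zero $p$ is large enough to avoid the two forbidden values $f$ and $g$, and that any chosen $h$ is genuinely a vertex (a non-unit). Both are settled by the observation that this set is an infinite $\R$-subspace of $C(X)$, containing $0$ together with the infinitely many scalar multiples of a non-constant function vanishing at $p$. No deeper obstacle arises, since the substantive topological construction is already encapsulated in Theorem 2.1.
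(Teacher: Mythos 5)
Your proposal is correct, and its skeleton coincides with the paper's: the same case split on whether $Z(f)\cap Z(g)$ is empty, with the non-adjacent case handled exactly as in the paper by extracting the common neighbour $h$ built inside the proof of Theorem 2.1. Where you diverge is the adjacent case. The paper disposes of it purely algebraically: it takes $h=2f$ (or $h=3f$ if $g=2f$), exploiting the fact that a nonzero scalar multiple of $f$ has the same zero-set as $f$, so it automatically meets both $Z(f)$ and $Z(g)$. You instead fix a common zero $p\in Z(f)\cap Z(g)$ and use the Hausdorff and complete regularity hypotheses to manufacture a nonzero function $\varphi$ with $\varphi(p)=0$, then choose among its infinitely many scalar multiples one avoiding $f$ and $g$. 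Your route invokes topology where none is needed, so it is longer; but it is also more robust. Indeed the paper's recipe silently fails in the degenerate case $f=0$ (which is a legitimate vertex, adjacent to everything): there $2f=3f=f$, so neither prescribed choice of $h$ is distinct from $f$. That gap is trivially patched (use multiples of $g$ instead, since $g\neq f=0$ forces $g\neq 0$), but your construction never encounters it, because your $\varphi$ is guaranteed nonzero by design, so its scalar multiples form an infinite family from which a valid $h\notin\{f,g\}$ can always be drawn. In short: the paper's argument buys brevity at the cost of an overlooked edge case; yours buys uniformity at the cost of an unnecessary appeal to complete regularity.
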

	
	\begin{proof}
		If $Z(f) \cap Z(g) = \phi $, then by theorem 2.1 there exists $h \in V({\Gamma}(C(X)))$ such that $f-h-g$ is a path in ${\Gamma}(C(X))$. If $Z(f) \cap Z(g) \neq \phi $ then let  $h = 2f,  ~if~ g \neq 2f ~and~ h = 3f$ otherwise. Then $h \in V({\Gamma}(C(X))) \setminus \{f,g\}$ and $f-h-g$ is a path in  ${\Gamma}(C(X))$.	
		
	\end{proof}

	\begin{corollary}
		Let $f,g \in {\Gamma}(C(X))$. Then\\ 
		$~(1)~ d(f,g) = 1 ~if~ and ~only~ if ~Z(f) \cap Z(g) \neq \phi $.\\ 
		$~(2)~  d(f,g) = 2~ if ~and~only ~ if ~Z(f) \cap Z(g) = \phi $.
	\end{corollary}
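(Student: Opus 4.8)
The plan is to read off both directions directly from the definition of adjacency in $\Gamma(C(X))$ together with the diameter bound from Theorem 2.1, so that essentially no new work is required beyond bookkeeping. Throughout I would keep the standing hypothesis $|X| > 2$ in force, since that is exactly what makes Theorem 2.1 available.

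First I would dispose of part $(1)$. By the very definition of the zero-set intersection graph, two distinct vertices $f$ and $g$ are joined by an edge precisely when $Z(f) \cap Z(g) \neq \phi$; and an edge is exactly a path of length one. Hence, with $f \neq g$ understood so that $d(f,g) \neq 0$, the equality $d(f,g) = 1$ is equivalent to $Z(f) \cap Z(g) \neq \phi$. This settles $(1)$ in both directions simultaneously.

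For part $(2)$ I would invoke Theorem 2.1: under $|X| > 2$ the graph $\Gamma(C(X))$ is connected with $\diam(\Gamma(C(X))) = 2$, so every pair of distinct vertices satisfies $d(f,g) \in \{1,2\}$. Now if $Z(f) \cap Z(g) = \phi$, then $f$ and $g$ are not adjacent, so by $(1)$ we have $d(f,g) \neq 1$; together with $d(f,g) \leq 2$ and $f \neq g$ this forces $d(f,g) = 2$. Conversely, if $d(f,g) = 2$ then $f$ and $g$ cannot be adjacent (otherwise their distance would be $1$), so by $(1)$ again $Z(f) \cap Z(g) = \phi$.

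The point worth flagging is that there is no genuine obstacle here: the corollary is just a contrapositive repackaging of the adjacency definition, and the only external ingredient is the diameter-$2$ conclusion of Theorem 2.1. That conclusion is precisely what excludes distances greater than $2$ and thereby turns $(1)$ and $(2)$ into an exhaustive dichotomy. I would only be careful to note that the dichotomy genuinely needs $|X| > 2$, since the proof of Theorem 2.1 shows that for $|X| = 2$ the graph splits into disconnected complete pieces and distances become infinite.
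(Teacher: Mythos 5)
Your proposal is correct and is exactly the argument the paper intends: part (1) is just the definition of adjacency in $\Gamma(C(X))$, and part (2) follows by combining (1) with the diameter-$2$ conclusion of Theorem 2.1 (the paper states this corollary without proof precisely because the argument is this routine). Your explicit remark that the dichotomy requires $|X| > 2$ is a worthwhile precision, since the paper's statement leaves that standing hypothesis implicit even though part (2) genuinely fails when $|X| = 2$.
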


	\begin{theorem}
		For any space $X$ with $|X| > 2$, $Rad({\Gamma}(C(X))) = 2$.
	\end{theorem}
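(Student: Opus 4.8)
The plan is to compute the minimum eccentricity directly. For any connected graph one has $Rad(G) \le diam(G)$, so Theorem 2.1 gives $Rad(\Gamma(C(X))) \le 2$ at once. To obtain the reverse inequality it suffices to check that every vertex has eccentricity at least $2$; since no eccentricity can exceed the diameter, this will in fact show that every eccentricity equals $2$ and hence that $Rad(\Gamma(C(X))) = 2$. By Corollary 2.3(2), the condition $ecc(f) \ge 2$ is equivalent to the existence of a vertex $g$ with $Z(f) \cap Z(g) = \phi$, so the whole statement reduces to producing, for each vertex $f$, such a companion $g$.

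The construction of $g$ is the heart of the argument and reuses the complete-regularity idea from Theorem 2.1. Fix a vertex $f$ and choose a point $b \in X \setminus Z(f)$. Because $Z(f)$ is closed, $b \notin Z(f)$, and $X$ is completely regular Hausdorff, there is $g \in C(X)$ with $g(b) = 0$ and $g \equiv 1$ on $Z(f)$. Then $b \in Z(g)$, so $Z(g) \neq \phi$ and $g$ is a genuine vertex, while $g \equiv 1$ on $Z(f)$ forces $Z(f) \cap Z(g) = \phi$; in particular $g \neq f$. Corollary 2.3(2) now yields $d(f,g) = 2$, so $ecc(f) = 2$. As $f$ was arbitrary, every vertex has eccentricity $2$, and therefore $Rad(\Gamma(C(X))) = 2$.

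The single delicate point is the availability of the point $b$: it requires $Z(f) \neq X$, i.e.\ that $f$ is not the identically-zero function (the unique non-unit whose zero-set is all of $X$). I expect this to be the only real obstacle, so the write-up should make explicit why each vertex entering the radius computation satisfies $Z(f) \neq X$; once that is settled, the remainder is a short eccentricity bookkeeping resting on Theorem 2.1 and Corollary 2.3, with complete regularity supplying the separating function exactly as in the diameter computation.
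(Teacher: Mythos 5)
Your argument is essentially the paper's: bound $Rad(\Gamma(C(X)))$ above by the diameter from Theorem 2.1, and below by producing, for each vertex $f$, a vertex $g$ with $Z(f)\cap Z(g)=\phi$, so that Corollary 2.3(2) gives $d(f,g)=2$ and hence $ecc(f)=2$. Where you differ is in the construction of $g$, and there your version is sounder than the paper's: the paper takes three points $x_1,x_2,x_3$ in disjoint open sets $U_i$ and asks for a function $g=1-f$ with $g(x_i)=0$ and $g(X\setminus U_i)=1$ for all $i$, which is incoherent as written (such a $g$ would be $1$ on $X\setminus U_1\supseteq U_2$ yet $0$ at $x_2\in U_2$); your point--closed-set separation by complete regularity delivers exactly what the argument actually needs.

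The ``single delicate point'' you flag, however, cannot be settled the way you hope, and it is worth being clear about why. Under the paper's stated definition the vertex set consists of \emph{all} non-units, so the zero function is a vertex; since $Z(0)=X$ meets every nonempty zero set, $0$ is adjacent to every other vertex, $ecc(0)=1$, and the radius of the graph as literally defined would be $1$, not $2$. No choice of the point $b$ exists when $f=0$, and no argument can repair this: the theorem is true only under the (unstated) convention that $0$ is excluded from the vertex set. The paper's own proof carries precisely the same gap --- it silently restricts to $f\in C(X)\setminus\{0\}$ and then asserts the conclusion ``for every $f \in V(\Gamma(C(X)))$.'' So your proof matches the paper's in substance and in its one defect, and your explicit identification of the $f=0$ obstruction is a merit, not a flaw; the honest fix is to amend the definition (or the statement) to exclude the zero function, after which your write-up is complete.
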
	
	\begin{proof}
		For any vertex $f \in {\Gamma}(C(X))$, it is evident that $1 \leq e(f) \leq 2$, so $1 \leq Rad({\Gamma}(C(X))) \leq 2$. Let $f \in C(X) \setminus \{0\}$ and $x_1, x_2, x_3$ be three distinct points in $X$. Since $X$ is completely regular Hausdorff space there exist mutually disjoint open sets $U_1, U_2$ and $U_3$ such that $x_i \in U_i$ for each $i \in \{1,2,3\}$. Let $g = 1 - f$ $\in C(X) \setminus \{0\}$ be such that $g(x_i) = 0$ and $g(X \setminus U_i) = 1 ~\forall~ i \in \{1,2,3\}$. Then $Z(f) \cap Z(g) = \phi$. Thus, for every $f \in   V({\Gamma}(C(X)))$ there exists $g \in  V({\Gamma}(C(X)))$ such that $Z(f) \cap Z(g) = \phi$ and so $2 = d(f,g) \leq e(f) \leq 2$. Therefore, $Rad({\Gamma}(C(X))) = 2$.
	\end{proof}		
	\section{Cycles in ${\Gamma}(C(X))$ }
	In this secton we explore the existence of cycles in  ${\Gamma}(C(X))$.
	\begin{theorem}
		For any topological  space $X$ with $|X| > 1$, ${\Gamma}(C(X))$ is both triangulated and hypertriangulated.
	\end{theorem}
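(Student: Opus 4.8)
The plan is to verify the two properties separately, in each case exhibiting an explicit third vertex that completes a triangle; the recurring technical point to watch is distinctness of the three vertices, since $\Gamma(C(X))$ is a simple graph.

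For the triangulated property I would start from an arbitrary vertex $f$, that is, $f$ with $Z(f) \neq \phi$. The key observation is that multiplying by a nonzero scalar leaves the zero set unchanged, $Z(cf) = Z(f)$ for $c \neq 0$. Thus when $f \neq 0$ the functions $f$, $2f$, $3f$ are pairwise distinct and share the common zero set $Z(f) \neq \phi$, so each pair is adjacent and $\{f, 2f, 3f\}$ is a triangle through $f$; this is the same device already used in Corollary 2.2. The only remaining case is $f = 0$, where $Z(0) = X$: since $|X| > 1$, complete regularity furnishes a nonzero non-unit $g$ (take distinct points $a, b$ and $g$ with $g(a) = 0$, $g(b) = 1$), after which $\{0, g, 2g\}$ is a triangle through $0$.

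For the hypertriangulated property I would take an arbitrary edge $\{f, g\}$, so that $Z(f) \cap Z(g) \neq \phi$, and fix a point $a \in Z(f) \cap Z(g)$. It then suffices to produce a non-unit $h \notin \{f, g\}$ with $a \in Z(h)$, because any such $h$ satisfies $a \in Z(h) \cap Z(f)$ and $a \in Z(h) \cap Z(g)$, making $\{f, g, h\}$ a triangle. Since $f$ and $g$ are distinct, at least one of them, say $f$, is nonzero, and as $f(a) = 0$ the infinitely many functions $\{cf : c \in \R\}$ all vanish at $a$; choosing $c$ with $cf \notin \{f, g\}$ delivers the required $h$. (Alternatively, one can run the constant-multiple trick of Corollary 2.2 with a short case analysis to rule out $h = g$.)

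The routine parts are the zero-set identity $Z(cf) = Z(f)$ together with the adjacency checks. The only place demanding care is the bookkeeping that guarantees the three chosen functions are genuinely distinct, which is why the degenerate case $f = 0$ and the possibility that a scalar multiple coincides with the other endpoint of the edge must each be handled explicitly.
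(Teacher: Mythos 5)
Your proposal is correct and follows essentially the same route as the paper: triangles through any vertex are built from scalar multiples $f$, $2f$, $3f$ (using $Z(cf) = Z(f)$ for $c \neq 0$), and an edge $\{f,g\}$ is completed to a triangle by a scalar multiple of one endpoint chosen to avoid the other. In fact your bookkeeping is more careful than the paper's own: since $0$ is a vertex ($Z(0) = X$), the paper's cycle $f-2f-3f-f$ and its choice $h = 2f$ or $3f$ both degenerate when $f = 0$, a case you handle explicitly via the triangle $\{0, g, 2g\}$ and by selecting the nonzero endpoint of the edge.
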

	
	\begin{proof}
		Let $f \in  V({\Gamma}(CX)))$. Then $f-2f-3f-f$ is a cycle of length 3 in ${\Gamma}(C(X))$. Hence, ${\Gamma}(C(X))$ is triangulated. Suppose $f$ and $g$  be adjacent vertices in ${\Gamma}(C(X))$. Let $h = 2f, ~if~ g \neq 2f$,  otherwise let $h = 3f$. Then $h \in  V({\Gamma}(C(X))) \setminus \{f,g\}$ and $f-g-h-f$ is a triangle in ${\Gamma}(C(X))$. Hence, $f-g$ is an edge in a triangle. Thus, ${\Gamma}(C(X))$ is hypertriangulated.
	\end{proof}
	
	\begin{corollary}
		If $|X| > 1$, then $gr({\Gamma}(C(X))) = 3$.
	\end{corollary}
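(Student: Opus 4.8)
The plan is to read this off directly from Theorem~3.1, which has just been established. The girth is by definition the length of a shortest cycle, so the task splits into a lower bound and a matching upper bound, and the lower bound is automatic for a simple graph.

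First I would record the trivial lower bound. Since $\Gamma(C(X))$ is a simple graph, having neither loops nor multiple edges, every cycle has length at least $3$. Hence, as soon as the graph is known to contain at least one cycle, we get $gr(\Gamma(C(X))) \geq 3$.

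Next I would produce a cycle of length exactly $3$ to obtain the reverse inequality, and this is precisely what the proof of Theorem~3.1 already supplies. For any non-unit $f \neq 0$ the three functions $f$, $2f$, $3f$ are distinct vertices sharing the common zero set $Z(f) = Z(2f) = Z(3f) \neq \phi$, so $f-2f-3f-f$ is a triangle in $\Gamma(C(X))$. The hypothesis $|X| > 1$ guarantees the existence of such a non-zero non-unit $f$, for instance any function vanishing at one point of $X$ but not at another, so the triangle genuinely exists and $gr(\Gamma(C(X))) \leq 3$.

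Combining the two bounds yields $gr(\Gamma(C(X))) = 3$. I do not expect any genuine obstacle here; the only point worth a moment's care is confirming that $f$, $2f$, $3f$ are pairwise distinct yet pairwise adjacent, which is immediate since multiplying by a nonzero scalar changes the function while leaving its zero set unchanged. In effect, the corollary is just the observation that being triangulated forces girth $3$ in any simple graph.
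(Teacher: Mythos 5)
Your proposal is correct and follows exactly the route the paper intends: the corollary is stated without proof as an immediate consequence of Theorem~3.1, whose triangle $f-2f-3f-f$ gives girth at most $3$, while simplicity of the graph gives girth at least $3$. In fact you are slightly more careful than the paper, since you note that one must take $f \neq 0$ (so that $f$, $2f$, $3f$ are genuinely distinct) and that such a vertex exists when $|X| > 1$.
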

	
	\begin{theorem}
		Let $f~and~g$ be two distinct vertices in ${\Gamma}(C(X))$. Then \\
		(1) $c(f,g) = 3~if~and~only~if~ Z(f) \cap Z(g) \neq \phi $.\\
		(2) $c(f,g) = 4~if~and~only~if~ Z(f) \cap Z(g) = \phi $.
	\end{theorem}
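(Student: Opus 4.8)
The plan is to read $c(f,g)$ as the length of a shortest cycle of ${\Gamma}(C(X))$ passing through both $f$ and $g$, and to reduce the whole statement to the adjacency criterion together with the hypertriangulated property already established in Theorem 3.1. The two items are logically complementary: for distinct vertices $f,g$ we will see that $c(f,g)$ can only equal $3$ or $4$, so it will suffice to prove the two ``if'' directions and then read off the ``only if'' directions by contraposition.

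For (1), I would argue as follows. Suppose $Z(f) \cap Z(g) \neq \phi$. Then $f$ and $g$ are adjacent, and by Theorem 3.1 the edge $f$--$g$ lies in a triangle, i.e.\ there is $h$ making $f$--$g$--$h$--$f$ a $3$-cycle through both $f$ and $g$; hence $c(f,g) \leq 3$. Since $gr({\Gamma}(C(X))) = 3$ by Corollary 3.2, no cycle is shorter than $3$, so $c(f,g) = 3$. Conversely, a cycle of length $3$ through $f$ and $g$ is a triangle, in which every two vertices are adjacent, forcing $f$ and $g$ adjacent and hence $Z(f) \cap Z(g) \neq \phi$.

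For (2), suppose $Z(f) \cap Z(g) = \phi$, so that $f$ and $g$ are non-adjacent. Then no $3$-cycle can contain both (a triangle has all pairs adjacent), which gives $c(f,g) \geq 4$. It remains to exhibit a $4$-cycle through $f$ and $g$, and for this I need two \emph{distinct} common neighbours. Theorem 2.1 already supplies one common neighbour: the function $h = h_1 h_2$ constructed there lies in $V({\Gamma}(C(X))) \setminus \{f,g\}$ and is not identically zero, since it equals $1$ on $\overline{U}$. The key observation will be that every nonzero scalar multiple $nh$ satisfies $Z(nh) = Z(h)$ and is therefore again adjacent to both $f$ and $g$; as $h \neq 0$, the functions $h, 2h, 3h, 4h$ are pairwise distinct, so at least two of them, say $h_1$ and $h_2$, escape the two-element set $\{f,g\}$. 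Then $f$--$h_1$--$g$--$h_2$--$f$ is a $4$-cycle, giving $c(f,g) \leq 4$ and hence $c(f,g) = 4$. The converse is immediate: $c(f,g) = 4$ rules out any triangle through $f$ and $g$, so these vertices are non-adjacent, and by Corollary 2.3 this is equivalent to $Z(f) \cap Z(g) = \phi$.

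The only genuinely constructive step, and the place where a little care is needed, is producing the second common neighbour in part (2): one must verify that the common neighbour furnished by Theorem 2.1 is nonzero, so that its scalar multiples are genuinely distinct vertices, and that enough of these multiples avoid $\{f,g\}$ to close up a $4$-cycle. Everything else rests on the adjacency rule, Corollary 2.3, and the triangulated/hypertriangulated structure of Theorem 3.1, and so should amount to routine bookkeeping.
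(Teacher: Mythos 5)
Your proposal is correct and follows essentially the same route as the paper: both reduce part (1) to adjacency plus a triangle through the edge $f$--$g$, and both build the $4$-cycle in part (2) from a common neighbour $h$ (via Theorem 2.1 / Corollary 2.2) together with a nonzero scalar multiple of $h$, exactly as in the paper's cycle $f-h-g-2h-f$. Your additional bookkeeping (checking $h \neq 0$ and that enough multiples of $h$ avoid $\{f,g\}$) is a point the paper glosses over, but it is a refinement of the same argument rather than a different one.
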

	
	\begin{proof}
		(1)~Let $c(f,g) = 3$. Then $f$ and $g$ are adjacent in ${\Gamma} (C(X))$. Thus  $Z(f) \cap Z(g) \neq \phi $. 
		
		Conversely, if  $Z(f) \cap Z(g) \neq \phi $ then $f$ and $g$ are adjacent in ${\Gamma}(C(X))$ and by corollary 2.2 there exists a vertex $h \in V({\Gamma}(C(X)))$ such that $h$ is adjacent to both $f$ and $g$. Thus $f-g-h-f$ is a cycle containing $f$ and $g$. Hence, $c(f,g) = 3$.\\
		
		(2) Let $c(f,g) = 4$. Then  $Z(f) \cap Z(g) = \phi $ by (1).
		
		 Conversely, suppose  $Z(f) \cap Z(g) = \phi $. Then by (1) there is no cycle of length 3 containing $f$ and $g$. By corollary 2.2, there exists $h \in V({\Gamma}(C(X)))$ such that $h$ is adjacent to both $f$ and $g$. So, $f-h-g-2h-f$ is a cycle of length 4 and it is the smallest cycle containing $f$ and $g$. Hence, $c(f,g) = 4$.
	\end{proof}
	
	\begin{theorem}
		${\Gamma}(C(X))$ is chordal if and only if $|X| \leq 3$.
	\end{theorem}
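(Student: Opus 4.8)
The plan is to prove the two implications separately, disposing of the ``only if'' direction by a direct topological construction and reserving the real work for the ``if'' direction when $|X|=3$.

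For the ``only if'' direction I argue the contrapositive: if $|X|\ge 4$ I exhibit an induced (chordless) $4$-cycle, which shows $\Gamma(C(X))$ is not chordal. First I would pick four distinct points $x_1,x_2,x_3,x_4$ and, using that $X$ is Hausdorff, pairwise disjoint open sets $U_i\ni x_i$. By complete regularity I choose $v_i\in C(X)$ with $0\le v_i\le 1$, $v_i(x_i)=0$ and $v_i\equiv 1$ on $X\setminus U_i$, so that $Z(v_i)\subseteq U_i$; in particular $Z(v_1),\dots,Z(v_4)$ are pairwise disjoint. Setting $f_1=v_1v_2,\ f_2=v_2v_3,\ f_3=v_3v_4,\ f_4=v_4v_1$ and using $Z(gh)=Z(g)\cup Z(h)$, each $Z(f_i)$ lies in a union of two ``consecutive'' $U_j$. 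I then check that consecutive $f_i$ are adjacent (their zero-sets share a common $Z(v_{i+1})$), while the two diagonal pairs satisfy $Z(f_1)\cap Z(f_3)=Z(f_2)\cap Z(f_4)=\emptyset$ by disjointness, and that the four functions are pairwise distinct nonunits. Hence $f_1-f_2-f_3-f_4-f_1$ is an induced $C_4$ and $\Gamma(C(X))$ is not chordal.

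For the ``if'' direction, I first observe that a finite Hausdorff space is discrete, so for $|X|\le 3$ we have $C(X)\cong\mathbb{R}^{|X|}$ and the adjacency of two vertices depends only on their zero-sets viewed as subsets of $X$. Two functions with the same zero-set are true twins, so $\Gamma(C(X))$ is the clique-blow-up (vertex substitution by cliques) of the finite graph $H$ whose vertices are the nonempty subsets of $X$, with $S\sim T$ iff $S\cap T\neq\emptyset$. Since substituting cliques into a graph preserves chordality, and $H$ occurs as an induced subgraph of the blow-up, it suffices to prove $H$ is chordal. The cases $|X|\le 2$ are immediate, since there $H$ is a single vertex or the path $\{a\}-\{a,b\}-\{b\}$. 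For $|X|=3$, write $X=\{a,b,c\}$: the full set $\{a,b,c\}$ is a universal vertex of $H$, hence lies on no induced cycle of length $\ge 4$, so I may delete it and work with the six proper nonempty subsets. There each singleton $\{x\}$ is simplicial, its only neighbours being the two pairs containing $x$, which are themselves adjacent; deleting the three singletons leaves the triangle on $\{a,b\},\{a,c\},\{b,c\}$, which is chordal. Reinstating the simplicial singletons and the universal vertex shows $H$ is chordal, completing this direction.

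I expect the main obstacle to be the $|X|=3$ analysis: one must justify carefully that passing to the finite type graph $H$ loses no chordality information (the true-twin / clique-substitution step) and then verify that $H$ has no induced $4$-cycle despite containing the triangle of pairs. This is precisely where sharpness appears, since for $|X|=4$ the four two-element subsets $\{x_1,x_2\},\{x_2,x_3\},\{x_3,x_4\},\{x_4,x_1\}$ already form an induced $C_4$ in $H$, mirroring the topological construction above.
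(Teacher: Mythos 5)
Your proposal is correct. The ``only if'' half is essentially the paper's own argument: the paper also picks four points $x_i$, pairwise disjoint neighbourhoods $U_i$, functions $h_i$ vanishing at $x_i$ and identically $1$ off $U_i$, and forms the products $h_1h_4,\ h_1h_2,\ h_2h_3,\ h_3h_4$ to exhibit an induced $4$-cycle --- the same construction as yours up to relabelling. The ``if'' half is where you genuinely diverge. The paper works directly with induced cycles: for $|X|=2$ it asserts that $\Gamma(C(X))$ is a disjoint union of three complete subgraphs, and for $|X|=3$ it chases a path $f_1-f_2-f_3-f_4$ inside a hypothetical chordless cycle, deduces $|Z(f_2)|=|Z(f_3)|=2$, and locates a chord by case analysis. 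You instead reduce structurally: finite Hausdorff spaces are discrete, adjacency depends only on zero-sets, functions with equal zero-sets are true twins, so $\Gamma(C(X))$ is the clique blow-up of the intersection graph $H$ of nonempty subsets of $X$, and chordality of $H$ (universal vertex $X$, simplicial singletons, triangle of pairs) finishes the proof. Your route buys two things: it computes the graph up to twins once and for all, making the sharpness at $|X|=4$ (the induced $C_4$ on the four consecutive pairs) transparent; and it treats the zero function correctly as the universal vertex with zero-set $X$ --- a point the paper's $|X|=2$ case actually misstates, since the zero function is adjacent to every vertex, so the graph is \emph{not} a disjoint union of three cliques (the conclusion ``chordal'' survives, but the paper's description does not). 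The one step you should write out rather than cite, since it carries the whole reduction, is the clique-substitution lemma: in a blow-up by cliques, an induced cycle of length at least $4$ meets each clique at most once (two cycle vertices in one clique would be adjacent, hence consecutive, and then the next vertex of the cycle would be adjacent to both), so it projects to an induced cycle of the same length in $H$; this is short and standard, so the omission is presentational, not mathematical.
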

	
	\begin{proof}
		Suppose ${\Gamma}(C(X))$ is chordal and let $|X| \geq 4$. Let $x_1,x_2,x_3,x_4 \in X$. Since $X$ is a Tychonoff space,  there exist mutually disjoint open sets $U_1,U_2,U_3~and~U_4$ such that $x_i \in U_i$, where $i \in \{1,2,3,4\}$. Let $h_i \in V({\Gamma}(C(X)))$ such that $h_i(x_i) = 0$, and $h_i(X \setminus U_i) = 1$ for each $i \in \{1,2,3,4\}$. Then $Z(h_i) \cap Z(h_j) = \phi$, whenever $i \neq j$. Consider the functions $f_1 = h_1h_4,~ f_2 = h_1h_2,~ f_3 = h_2h_3$ and $f_4 = h_3h_4$. Clearly, $f_i \in  V({\Gamma}(C(X))) ~ (i = 1,2,3,4)$ and $f_1-f_2-f_3-f_4-f_1$ is a chordless cycle since $Z(h_1) \cap Z(h_3) = \phi$ and $Z(h_2) \cap Z(h_4) = \phi$ which is a contradiction. Hence, ${\Gamma}(C(X))$ is chordal only when $|X| \leq 3$. 
		
		Conversely, if $|X| \leq 3$ then the following two cases arise.\\
		Case I: If $X = \{a,b\}$, then $f$ is a vertex in ${\Gamma}(C(X))$  if $f(a) = 0$ and $f(b) \neq 0$, $f(a) \neq 0$ and $f(b) = 0$ or $f(a) = 0$ and $f(b) = 0$. Thus,  ${\Gamma}(C(X))$ is the disjoint union of three complete subgraphs $A = \{f \in C(X) : f(a)= 0 ~and~ f(b) \neq 0\}$, $B = \{f \in C(X) : f(a)\neq 0 ~and~ f(b) = 0\}$ and $C = \{f \in C(X) : f(a)= 0 ~and~ f(b) = 0\}$. Hence, if $C$ is an induced cycle in ${\Gamma}(C(X))$, then it is contained in the complete subgraph induced by $A, B$ or $C$ and so $C$ has a chord of length greater than $3$. Therefore,  ${\Gamma}(C(X))$ is chordal.\\
		Case II: Let $X = \{a,b,c\}$ and $C$ be an induced cycle in  ${\Gamma}(C(X))$ of length greater than 3. Consider a path $f_1-f_2-f_3-f_4$ in $C$ such that $f_1,f_2,f_3$ and $f_4$ are distinct. If $Z(f_1) \cap Z(f_3) \neq \phi$ or $Z(f_2) \cap Z(f_4) \neq \phi$, then we have a chord joining $f_1$ and $f_3$ or a chord joining $f_2$ and $f_4$. So we assume that  $Z(f_1) \cap Z(f_3) = \phi$ and $Z(f_2) \cap Z(f_4) = \phi$. Then we must have $|Z(f_2)| = |Z(f_3)| = 2$. If $f_1$ and $f_4$ are adjacent then, there is a chord joining $f_2$ and $f_4$. So assume that $f_1$ and $f_4$ are not adjacent then, there is a vertex $f_5$ which is adjacent to $f_4$. If $|Z(f_5)| = 1$, then $f_5$ is adjacent to $f_3$ and if  $|Z(f_5)| = 2$, then either $f_5$ is adjacent to $f_2$ and $f_3$ or $f_5$ is adjacent to $f_1$ and $f_2$. Thus in each case $C$ has a chord and hence, ${\Gamma}(C(X))$ is chordal.
	\end{proof}
	
	\begin{theorem}
		For $|X| > 1$, ${\Gamma}(C(X))$ is not complemented.
	\end{theorem}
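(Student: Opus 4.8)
The plan is to prove the stronger statement that $\Gamma(C(X))$ contains no pair of orthogonal vertices at all; since $V(\Gamma(C(X)))$ is nonempty whenever $|X|>1$ (any continuous function with a nonempty zero set is a non-unit), this forces the graph to fail the definition of complemented at every vertex. The guiding observation is that $u\perp v$ demands both that $u,v$ be adjacent \emph{and} that they have no common neighbour. Consequently it suffices to prove the following: every pair of distinct adjacent vertices $f,g$ -- that is, with $Z(f)\cap Z(g)\neq\phi$ -- admits a third vertex $h$ adjacent to both.

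To produce such an $h$ I would exploit scalar multiples, which preserve zero-sets. Fix a point $p\in Z(f)\cap Z(g)$. If $f\neq 0$, set $h=2f$ when $g\neq 2f$ and $h=3f$ otherwise; in either case $Z(h)=Z(f)\ni p$, so $h$ is a vertex with $Z(h)\cap Z(f)\neq\phi$ and $Z(h)\cap Z(g)\ni p$, while $h\notin\{f,g\}$ since $f\neq 0$ makes $2f$ and $3f$ distinct from $f$ and the choice excludes $g$. If instead $f=0$, then $g\neq 0$ because the vertices are distinct, and the same construction applied to $g$ supplies the common neighbour. This settles every adjacent pair, hence no two vertices are orthogonal and the graph is not complemented.

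For $|X|>2$ one may instead shorten the argument by simply citing Corollary 2.2, which already grants a common neighbour to any two distinct vertices and therefore, in particular, to any adjacent pair. The one situation genuinely outside the reach of that corollary is $|X|=2$: there $\Gamma(C(X))$ is disconnected (Theorem 2.1), so Corollary 2.2 does not apply and the scalar-multiple construction above must be invoked directly. This boundary value, together with the degenerate possibility $f=0$ (where $2f=f$ fails to give a new vertex and one must pass to $g$), is the only real subtlety I anticipate; the remaining verifications are routine.
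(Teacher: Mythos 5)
Your proof is correct, and at its core it rests on the same fact as the paper's: an orthogonal pair $f\perp g$ must be adjacent yet have no common neighbour, while in $\Gamma(C(X))$ every adjacent pair \emph{does} have a common neighbour, so no orthogonal pairs exist. The difference is in execution. The paper argues by contradiction through the cycle-length machinery: if $f\perp g$ then $c(f,g)=4$, so Theorem 3.3 gives $Z(f)\cap Z(g)=\phi$, contradicting adjacency. You instead unwind this into a direct construction, exhibiting the common neighbour $h$ as a scalar multiple ($2f$ or $3f$, passing to $g$ when $f=0$). Your route buys two things the paper's citation chain does not actually deliver. First, the theorem is stated for $|X|>1$, but Theorem 3.3's proof depends on Corollary 2.2, which is stated (and proved, via Theorem 2.1) only for $|X|>2$; your inline construction works verbatim when $|X|=2$, where the graph is a disjoint union of three complete subgraphs and the scalar trick still produces a common neighbour inside the relevant component. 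Second, you patch the degenerate case $f=0$ (where $2f=3f=f$ and the paper's trick, as used in Theorem 3.1 and Corollary 2.2, silently fails) by switching to multiples of $g$. So your proof is not a different idea, but it is a more careful and self-contained rendering that closes two small gaps the paper leaves open; the paper's version is shorter because it leans on previously established results, at the cost of inheriting their hypotheses.
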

	\begin{proof}
		Suppose ${\Gamma}(C(X))$ is complemented. Then for every $f \in \overline{{\Gamma}_2'C(X)}$ there exist $g$ such that $f \perp g$, thus $c(f,g) = 4$. By theorem 3.3, $Z(f) \cap Z(g) = \phi$, which is a contradiction.
	\end{proof}
	
	\section{Dominating Sets in ${\Gamma}(C(X))$}
	
	\begin{theorem}
		If $|X| > 1$, then $dt({\Gamma}(C(X))) = 2$.
	\end{theorem}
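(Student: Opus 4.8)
The plan is to establish the two bounds $dt(\Gamma(C(X))) \geq 2$ and $dt(\Gamma(C(X))) \leq 2$ separately. The lower bound is essentially a consequence of the radius computation, while the upper bound requires exhibiting an explicit dominating pair.

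For the lower bound I would argue that no singleton can dominate. A set $\{f\}$ is a dominating set exactly when $f$ is adjacent to every other vertex, i.e. when $e(f)=1$. But by Theorem 2.4 we have $Rad(\Gamma(C(X)))=2$, so every vertex has eccentricity at least $2$ and no vertex is adjacent to all the others. Hence no singleton dominates $\Gamma(C(X))$, and therefore $dt(\Gamma(C(X))) \geq 2$.

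For the upper bound the key reduction is the observation that if two vertices $f,g$ satisfy $Z(f)\cup Z(g)=X$, then $\{f,g\}$ dominates: any vertex $h$ has $Z(h)\neq\phi$, so a point of $Z(h)$ lies in $Z(f)$ or in $Z(g)$, forcing $Z(h)\cap Z(f)\neq\phi$ or $Z(h)\cap Z(g)\neq\phi$, i.e. $h$ is adjacent to $f$ or to $g$. To construct such a pair I would use $|X|>1$ and complete regularity: choose distinct $a,b\in X$ and $v\in C(X)$ with $v(a)=0$, $v(b)=1$ and $0\leq v\leq 1$, then set $f=\max(v-\tfrac12,\,0)$ and $g=\max(\tfrac12-v,\,0)$. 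I would then check that $f,g\in C(X)$, that $Z(f)=\{x: v(x)\leq\tfrac12\}\ni a$ and $Z(g)=\{x: v(x)\geq\tfrac12\}\ni b$ (so both are non-units), that $f(b)=\tfrac12\neq 0$ and $g(a)=\tfrac12\neq 0$ (so both are nonzero vertices, and $f\neq g$), and finally that every $x\in X$ satisfies $v(x)\leq\tfrac12$ or $v(x)\geq\tfrac12$, whence $Z(f)\cup Z(g)=X$. By the reduction above $\{f,g\}$ is then a dominating set of size $2$, so $dt(\Gamma(C(X)))\leq 2$; combined with the lower bound this gives $dt(\Gamma(C(X)))=2$.

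I expect the only genuine obstacle to be the upper bound, specifically the choice of the two functions. The naive attempt of taking $f$ and $1-f$ fails, because their zero-sets give only $v^{-1}(\{0,1\})$, which need not cover $X$. Thresholding at $\tfrac12$ is what repairs this: the two zero-sets $\{v\leq\tfrac12\}$ and $\{v\geq\tfrac12\}$ overlap along $\{v=\tfrac12\}$ and together exhaust the whole space, which is exactly the property needed to dominate every vertex. This is the single point in the argument where some care is required.
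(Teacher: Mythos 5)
Your argument is correct in substance, but it takes a genuinely different route from the paper's, and the difference is worth recording. The paper treats the lower bound as evident; for the upper bound it invokes pure-ideal machinery: choose a vertex $f$ with $a\in Int_XZ(f)$, use Al-Ezeh's theorem \cite{Ezeh} that $O^a$ is a pure ideal to produce $g\in O^a$ with $f=fg$ (hence $g=1$ on $Supp(f)$), and then assert that $\{g,g^2\}$ dominates. You replace all of this with an elementary Urysohn-type construction: $v(a)=0$, $v(b)=1$, $f=\max(v-\tfrac12,0)$, $g=\max(\tfrac12-v,0)$, so that $Z(f)=\{v\le\tfrac12\}$ and $Z(g)=\{v\ge\tfrac12\}$ cover $X$; your reduction step (that $Z(f)\cup Z(g)=X$ forces $\{f,g\}$ to dominate) is exactly the easy direction of the paper's Theorem 4.2, which you prove independently, so there is no circularity. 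What your version buys is self-containedness (no pure ideals, no external citation), and it is in fact more convincing than the paper's write-up: as written, the claim that $\{g,g^2\}$ dominates is hard to justify, since $Z(g)=Z(g^2)$ and there is no reason every nonempty zero-set should meet $Z(g)$ (indeed $Z(g)\subseteq Z(f)$ here). What the paper's construction really yields is the pair $\{f,1-g\}$, because $Z(1-g)\supseteq Supp(f)\supseteq Coz(f)$ gives $Z(f)\cup Z(1-g)=X$ --- that is, the paper too ultimately rests on your covering criterion.

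There is, however, one genuine (though easily repaired) gap on your side: the lower bound. You cite Theorem 2.4, but that theorem is stated and proved only for $|X|>2$, whereas the present statement covers all $|X|>1$. For $|X|=2$ the radius argument is unavailable; instead, by the case analysis in the proof of Theorem 2.1, $\Gamma(C(X))$ is then a disjoint union of complete subgraphs, hence disconnected, and a disconnected graph with more than one nonempty component admits no dominating singleton. (Note that both your argument and the paper's tacitly exclude the zero function from the vertex set; if $0$ were admitted as a vertex it would be adjacent to everything, and the domination number would be $1$.) Your upper-bound construction, by contrast, works verbatim when $|X|=2$, so this one-line patch completes the proof for the whole range $|X|>1$.
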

	
	\begin{proof}
		It is evident that $dt({\Gamma}(C(X))) \neq 1$. Let $f \in V({\Gamma}(C(X)))$ and $a \in Int_XZ(f)$, then $f \in O^a$. Since $O^a$ is a pure ideal, there exists $g \in O^a~ \subseteq Z(C(X))$ such that $f = fg$, that is $g = 1$ on $Supp(f)$ \cite{Ezeh}. Hence, $g^2 \in V({\Gamma}C(X))$ and $\{g,g^2\}$ dominates $V({\Gamma}C(X))$. Therefore, $dt({\Gamma}(C(X))) = 2$.
	\end{proof}
	
	\begin{theorem}
		If $|X| > 1$, then for any $f,g \in V({\Gamma}(C(X)))$, $\{f,g\}$ dominates ${\Gamma}(C(X))$ if and only if $Z(f) \cup Z(g) = X$.
	\end{theorem}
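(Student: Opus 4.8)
The plan is to prove both implications directly, with essentially all of the content living in the forward direction; the converse is a one-line unwinding of the definitions.

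For the converse, suppose $Z(f)\cup Z(g)=X$ and let $h$ be any vertex with $h\notin\{f,g\}$. Since $h$ is a vertex, $Z(h)\neq\phi$, so I pick a point $x\in Z(h)$. As $x\in X=Z(f)\cup Z(g)$, either $x\in Z(f)$ or $x\in Z(g)$; in the first case $x\in Z(h)\cap Z(f)$ and in the second $x\in Z(h)\cap Z(g)$, so in either case $h$ is adjacent to $f$ or to $g$. Hence every vertex outside $\{f,g\}$ is dominated, and $\{f,g\}$ is a dominating set. This direction uses no topology beyond the fact, noted at the start of Section~2, that the zero-set of a vertex is nonempty.

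For the forward direction, I would argue by contraposition: assume $Z(f)\cup Z(g)\neq X$ and produce a vertex that is not dominated by $\{f,g\}$. Choose $p\in X\setminus(Z(f)\cup Z(g))$, so that $f(p)\neq 0$ and $g(p)\neq 0$. Because $Z(f)$ and $Z(g)$ are closed, $Z(f)\cup Z(g)$ is a closed set not containing $p$, and complete regularity of $X$ yields an $h\in C(X)$ with $h(p)=0$ and $h\equiv 1$ on $Z(f)\cup Z(g)$. Then $p\in Z(h)$, so $Z(h)\neq\phi$ and $h$ is a vertex; moreover $h(p)=0$ while $f(p),g(p)\neq 0$, so $h\notin\{f,g\}$. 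Since $h=1$ on $Z(f)\cup Z(g)$, we get $Z(h)\cap Z(f)=\phi$ and $Z(h)\cap Z(g)=\phi$, so $h$ is adjacent to neither $f$ nor $g$. Thus $\{f,g\}$ fails to dominate, which establishes the contrapositive.

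The main obstacle, and it is a mild one, is the forward direction's construction of $h$: this is the only place where topology enters, via complete regularity to separate the point $p$ from the closed set $Z(f)\cup Z(g)$. The small care required is in verifying that the resulting $h$ is genuinely a vertex (its zero-set contains $p$ and is therefore nonempty) and that it is distinct from both $f$ and $g$ (which follows because $h$ vanishes at $p$ whereas $f$ and $g$ do not). Everything else is a direct translation between the adjacency relation $Z(\cdot)\cap Z(\cdot)\neq\phi$ and the domination condition.
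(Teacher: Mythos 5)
Your proof is correct and follows essentially the same route as the paper: the converse by directly unwinding the adjacency condition (any vertex has a nonempty zero-set, which must meet $Z(f)$ or $Z(g)$), and the forward direction by producing, via complete regularity, a function vanishing at a point of $X\setminus(Z(f)\cup Z(g))$ and equal to $1$ on $Z(f)\cup Z(g)$. You are in fact slightly more careful than the paper, since you explicitly justify why such an $h$ exists, why it is a vertex, and why $h\notin\{f,g\}$ — details the paper leaves implicit.
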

	
	\begin{proof}
		Suppose $\{f,g\}$ dominates ${\Gamma}(C(X))$ and assume that $y \in X \setminus (Z(f) \cup Z(g))$. Let $h \in C(X)$ such that $h(y) = 0$ and $h(Z(f) \cup Z(g)) = 1$. Then, $h \in  V({\Gamma}(C(X)))$, $h \neq \{f,g\}$, $Z(f) \cap Z(h) = \phi$ and $Z(g) \cap Z(h) = \phi$. This is a  contradiction since $\{f,g\}$ dominates ${\Gamma}(C(X))$.
		
		 Conversely, suppose $Z(f) \cup Z(g) = X$.  Let $h \in C(X) $ such that  $Z(f) \cap Z(h) = \phi$ and $Z(g) \cap Z(h) = \phi$.  Then, $Z(h) \cap (Z(f) \cup Z(g)) = \phi$ which implies that  $Z(h) = \phi$, i.e., $h \notin  V({\Gamma}(C(X)))$. Thus, $\{f,g\}$ dominates ${\Gamma}(C(X))$.
	\end{proof}

	\begin{theorem}
		$C(X)$ is a von Neuman regular ring if and only if $C(X)$ is an almost regular ring and for all $f \in V({\Gamma}(C(X)))$ there exists $g \in V({\Gamma}(C(X)))$ such that $Z(f) \cap Z(g) = \phi$ and $\{f,g\}$ dominates ${\Gamma}(C(X))$.
	\end{theorem}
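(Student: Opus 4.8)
The plan is to translate everything into the language of zero-sets and to exploit the classical fact that $C(X)$ is von Neumann regular if and only if $X$ is a $P$-space, i.e.\ if and only if every zero-set of $X$ is open, hence clopen (see \cite{Gillman}). I will also use the dictionary that $f$ is a unit exactly when $Z(f)=\emptyset$, that $f$ is a zero-divisor exactly when $\operatorname{int}_X Z(f)\neq\emptyset$, and the characterisation from Theorem 4.2 that $\{f,g\}$ dominates ${\Gamma}(C(X))$ if and only if $Z(f)\cup Z(g)=X$. Combined with the hypothesis $Z(f)\cap Z(g)=\emptyset$, the domination clause therefore says precisely that $Z(f)$ and $Z(g)$ are complementary, so that $Z(f)=X\setminus Z(g)$. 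One preliminary point to flag: the quantifier ``for all $f$'' must be read over the \emph{nonzero} non-units, since for $f=0$ we have $Z(f)=X$ and no vertex $g$ can satisfy $Z(f)\cap Z(g)=\emptyset$; I will carry this reading throughout.

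For the forward implication, I would assume $C(X)$ is von Neumann regular. First I would verify that it is almost regular: given a non-unit $f$, write $f=f^2h$; then $f(1-fh)=0$, and $1-fh\neq 0$ (otherwise $fh=1$ and $f$ is a unit), so $f$ is a zero-divisor. Next, fix a nonzero non-unit $f$. Since $X$ is a $P$-space, $Z(f)$ is clopen, so its indicator function $g=\chi_{Z(f)}$ is continuous; here $Z(g)=X\setminus Z(f)\neq\emptyset$ because $f\neq 0$, whence $g\in V({\Gamma}(C(X)))$. By construction $Z(f)\cap Z(g)=\emptyset$ and $Z(f)\cup Z(g)=X$, so $\{f,g\}$ dominates ${\Gamma}(C(X))$ by Theorem 4.2, producing the required $g$.

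For the converse, I would assume $C(X)$ is almost regular and that every nonzero non-unit $f$ admits such a $g$. For each such $f$ the two conditions force $Z(f)\cup Z(g)=X$ and $Z(f)\cap Z(g)=\emptyset$, so $Z(f)=X\setminus Z(g)$ is the complement of the closed set $Z(g)$ and is therefore open. Thus every nonempty proper zero-set is open; the empty zero-set and $Z(0)=X$ are open as well, so every zero-set of $X$ is open, $X$ is a $P$-space, and $C(X)$ is von Neumann regular by \cite{Gillman}. (Equivalently, avoiding the $P$-space citation, once $Z(f)$ is known to be clopen one may set $h=1/f$ on the open set $X\setminus Z(f)$ and $h=0$ on the open set $Z(f)$; gluing over this clopen partition gives $h\in C(X)$ with $f=f^2h$, exhibiting von Neumann regularity directly.) The main obstacle is conceptual rather than computational: one must recognise that ``disjoint zero-sets that dominate'' is exactly a \emph{complementary} pair, and hence encodes clopenness of $Z(f)$, which is the precise content of the $P$-space condition. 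It is worth remarking that the converse never actually invokes the almost-regularity hypothesis---the clopenness of all nonempty zero-sets already forces each to have nonempty interior, so almost regularity is a consequence of the domination clause rather than an independent assumption.
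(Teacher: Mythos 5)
Your proposal is correct and follows essentially the same route as the paper: the forward direction uses the $P$-space property of $X$ to make $Z(f)$ clopen and takes the characteristic function of $Z(f)$ as the dominating partner (exactly the paper's $g$), while the converse combines $Z(f)\cap Z(g)=\emptyset$ with the domination criterion $Z(f)\cup Z(g)=X$ of Theorem 4.2 to conclude $Z(f)=X\setminus Z(g)$ is open, hence $C(X)$ is von Neumann regular. Your two side remarks --- that $f=0$ must be excluded from the quantifier for the forward implication to make sense (since no vertex $g$ has $Z(g)\cap Z(0)=\emptyset$), and that the converse never actually invokes the almost-regularity hypothesis --- are both accurate observations about gaps the paper's own statement and proof gloss over.
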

	
	\begin{proof}
		Suppose $C(X)$ is a von Neuman regular ring.  Then it is clear that $C(X)$ is an almost regular ring and for all $f \in V({\Gamma}(C(X)))$, $Z(f)$ is open. Consider the function $g \in C(X)$ such that  
		\begin{equation*}
			g(x) =
			\begin{cases}
				1, ~~~~~~x \in Z(f)\\
				0, ~~~~~~x \notin Z(f).
				
			\end{cases}
		\end{equation*}
		Then $g \in V({\Gamma}(C(X)))$ and $Z(f) \cup Z(g)$ = $Z(f) \cup (X \setminus Z(f))$ = $X$. Hence, by theorem 4.2 $\{f,g\}$ dominates ${\Gamma}(C(X))$.
		
		 Conversely, suppose $C(X)$ is an almost regular ring and for all $f \in V({\Gamma}(C(X)))$ there exists $g \in V({\Gamma}(C(X)))$ such that  $Z(f) \cap Z(g) = \phi$ and $\{f,g\}$ dominates ${\Gamma}(C(X))$. Since $X$ is an almost $P$-space, every non-unit element in $C(X)$ has a zero-set with non-empty interior. That is every element in $C(X)$ is either a unit or a zero-divisor. If $f \in C(X)$ is a unit then, $f = f^2f^{-1}$. Suppose now $f \in  V({\Gamma}(C(X)))$. By hypothesis $Z(f) \cap Z(g) = \phi$, so $Coz(f) \cup Coz(g) = X$ which implies that $Z(f)$ is open since $Z(f) = Coz(g)$. Therefore, $C(X)$ is a von Neuman regular ring.
	\end{proof} 
	
	The following theorem is a direct consequence of theorem 5.3 $\cite{Badie}$ and theorem 4.2. 
	
	\begin{theorem}
		The graph ${\Gamma}(C(X))$ is complemented if and only if for every $f \in V({\Gamma}(C(X)))$ there exists $g \in V({\Gamma}(C(X)))$ such that $Z(f) \cap Z(g) = \phi$ and $\{f,g\}$ dominates ${\Gamma}(C(X))$.  
	\end{theorem}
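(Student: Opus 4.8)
The plan is to treat the statement as a dictionary translation: Badie's Theorem~5.3 of \cite{Badie} already characterizes complementedness of this graph, and the only remaining work is to recast the condition it produces in the language of dominating sets by quoting Theorem~4.2. Since $\Gamma(C(X))=\overline{\Gamma_2'C(X)}$, the orthogonality relevant here is the one attached to the adjacency $Z(f)\cap Z(g)=\phi$, and I would phrase Badie's characterization in the form: the graph is complemented if and only if for every vertex $f$ there is a vertex $g$ with $Z(f)\cap Z(g)=\phi$ such that $f$ and $g$ admit no common neighbor.

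First I would isolate the topological fact that turns the ``no common neighbor'' clause into a covering condition: in a completely regular Hausdorff space, every nonempty open set contains a nonempty zero-set. Indeed, for $x$ in an open set $U$, complete regularity furnishes $h\in C(X)$ with $h(x)=0$ and $h(X\setminus U)=1$, so that $\phi\neq Z(h)\subseteq U$. Using this I would establish the key equivalence: for vertices $f,g$ with $Z(f)\cap Z(g)=\phi$, a common neighbor is exactly a vertex $w$ with $\phi\neq Z(w)\subseteq X\setminus(Z(f)\cup Z(g))$; since $X\setminus(Z(f)\cup Z(g))$ is open, such a $w$ exists if and only if this set is nonempty. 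Hence $f$ and $g$ have no common neighbor if and only if $Z(f)\cup Z(g)=X$. Feeding this back into Badie's characterization, the graph is complemented if and only if for every $f$ there is $g$ with $Z(f)\cap Z(g)=\phi$ and $Z(f)\cup Z(g)=X$.

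Finally I would invoke Theorem~4.2, which asserts that $\{f,g\}$ dominates $\Gamma(C(X))$ precisely when $Z(f)\cup Z(g)=X$. Substituting this equivalence into the previous line replaces the covering condition by the domination condition and yields the claimed biconditional directly; both implications come out of the same substitution, so no separate converse argument is needed. The main obstacle, to my mind, is not the final substitution but the faithful matching of Badie's orthogonality hypothesis with the covering condition $Z(f)\cup Z(g)=X$: this is exactly where the complete-regularity observation does the real work, for without it a nonempty open complement $X\setminus(Z(f)\cup Z(g))$ need not contain a zero-set and the equivalence between ``no common neighbor'' and ``$Z(f)\cup Z(g)=X$'' would break down.
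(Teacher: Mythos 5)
Your proposal is correct and takes essentially the same route as the paper, whose entire proof is the one-sentence remark that the statement follows directly from Theorem 5.3 of \cite{Badie} together with Theorem 4.2. The only real difference is that you unpack the cited ingredient instead of quoting it --- proving via complete regularity that two vertices with disjoint zero-sets admit a common neighbour in the comaximal sense precisely when $Z(f)\cup Z(g)\neq X$ --- and your decision to read ``complemented'' through the adjacency $Z(f)\cap Z(g)=\phi$ of $\Gamma_2'(C(X))=\overline{\Gamma(C(X))}$ is exactly the reading the paper intends, since with respect to $\Gamma(C(X))$'s own adjacency the graph is never complemented (Theorem 3.5).
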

	\section{Line graph of ${\Gamma}(C(X))$}
	Suppose $f,g \in  V({\Gamma}(C(X)))$, then $[f,g]$ is a vertex in $L({\Gamma}(C(X)))$ if $Z(f) \cap Z(g) \neq \phi$. In $L({\Gamma}(C(X)))$, $[f,g] = [g,f]$ as ${\Gamma}(C(X))$ is an undirected graph and for two distinct vertices $[f_1,f_2]$ and $[g_1,g_2]$ in $L({\Gamma}(C(X)))$, $[f_1,f_2]$ is adjacent to $[g_1,g_2]$ if $f_i = g_j$, for some $i,j \in \{1,2\}$.\\
	
	We first investigate when is $L({\Gamma}(C(X)))$ connected and then compute  diameter and radius of $L({\Gamma}(C(X)))$.
	\begin{lemma}
		Let $[f_1,f_2]$ and $[g_1,g_2]$ be distinct vertices in  $L({\Gamma}(C(X)))$. Then there is a vertex $[h_1,h_2]$ which is adjacent to both  $[f_1,f_2]$ and $[g_1,g_2]$ in $L({\Gamma}(C(X)))$ if and only if $Z(f_i) \cap Z(g_j) \neq \phi$ for some $i,j \in \{1,2\}$.
	\end{lemma}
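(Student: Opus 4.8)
The plan is to argue directly from the two defining conditions of $L({\Gamma}(C(X)))$: a pair $[h_1,h_2]$ is a vertex exactly when $h_1\neq h_2$ and $Z(h_1)\cap Z(h_2)\neq\phi$, while two such vertices are adjacent exactly when the underlying pairs of functions share an entry. The whole statement should reduce to translating ``common neighbour in the line graph'' into ``shared function plus nonempty zero-set intersection'', so I would keep both directions at the level of these combinatorial identities and never invoke the topology of $X$.

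For the forward direction, suppose $[h_1,h_2]$ is adjacent to both $[f_1,f_2]$ and $[g_1,g_2]$. Adjacency forces $\{h_1,h_2\}$ to meet $\{f_1,f_2\}$, say $h_k=f_i$, and to meet $\{g_1,g_2\}$, say $h_l=g_j$. If the two hits coincide ($k=l$), then $f_i=g_j$, whence $Z(f_i)\cap Z(g_j)=Z(f_i)\neq\phi$ because $f_i$ is a vertex of ${\Gamma}(C(X))$. If they are the two distinct entries of $[h_1,h_2]$, then $Z(f_i)\cap Z(g_j)=Z(h_1)\cap Z(h_2)\neq\phi$ since $[h_1,h_2]$ is itself a vertex. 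In both cases the required intersection is nonempty.

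For the converse I would assume $Z(f_i)\cap Z(g_j)\neq\phi$ for some $i,j$ and split on whether the two edges share a function. If $\{f_1,f_2\}\cap\{g_1,g_2\}=\phi$, then $f_i\neq g_j$, so $[f_i,g_j]$ is a genuine vertex; it is adjacent to $[f_1,f_2]$ through $f_i$ and to $[g_1,g_2]$ through $g_j$, and it can equal neither edge, as that would place $g_j$ in $\{f_1,f_2\}$ or $f_i$ in $\{g_1,g_2\}$, against disjointness. If instead the edges share a function $s$, then $Z(s)\neq\phi$; writing the non-shared endpoints as $p,q$ (necessarily distinct, else the two edges would coincide), I pick a nonzero one $r\in\{p,q\}$ and a scalar $n\geq 2$ with $nr\notin\{f_1,f_2,g_1,g_2\}$, which is possible since the $nr$ are distinct functions for distinct $n$. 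Then $[s,nr]$ is a vertex, because $Z(s)\cap Z(nr)=Z(s)\cap Z(r)\neq\phi$ (as $\{s,r\}$ is one of the two given edges), it is adjacent to both edges through $s$, and the choice of $n$ keeps it distinct from each.

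The one genuinely delicate point is this last construction. The naive common neighbour $[f_i,g_j]$ collapses onto one of the prescribed vertices precisely when the two edges fail to be vertex-disjoint, and a scalar multiple of the zero function is again zero, so the perturbation must be taken along a nonzero endpoint. Isolating the vertex-disjoint case from the shared-function case, and perturbing correctly in the latter, is what I expect to be the crux; the rest is the bookkeeping recorded above.
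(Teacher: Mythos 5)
Your argument is correct, and its forward half coincides with the paper's. In the converse, however, the paper takes a different decomposition: it relabels so that $Z(f_1)\cap Z(g_1)\neq\phi$ and then splits on $f_1\neq g_1$ (using $[f_1,g_1]$ as the common neighbour) versus $f_1=g_1$ (using $[f_1,rf_1]$ for a scalar $r\in\mathbb{R}\setminus\{0,1\}$ chosen with $rf_1\notin\{f_2,g_2\}$). Your split --- vertex-disjoint edges versus edges sharing a function --- is the better one, because it closes two gaps that the paper's version leaves open. First, when the edges share a function but not via the relabelled pair (say $g_1=f_2$), the paper's candidate $[f_1,g_1]$ is literally the vertex $[f_1,f_2]$ itself, and a vertex is not adjacent to itself; your disjointness hypothesis rules out exactly this collapse, and your shared-function case handles such configurations by a separate construction. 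Second, the paper's perturbation $[f_1,rf_1]$ fails when the shared function is the zero function (a legitimate vertex, adjacent to every other vertex, since $Z(0)=X$), for then $rf_1=f_1$ and $[f_1,rf_1]$ is not a vertex at all; you dodge this by scaling a \emph{nonzero non-shared} endpoint, which always exists because the two non-shared endpoints are distinct. So both proofs rest on the same two devices --- a bridge vertex $[f_i,g_j]$ in the generic case and a scalar multiple in the degenerate one --- but your case analysis is the one that actually makes the argument airtight, and it in effect repairs the paper's proof.
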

	
	\begin{proof}
		Suppose there exists a vertex $[h_1,h_2]$ which is adjacent to both  $[f_1,f_2]$ and $[g_1,g_2]$ in $L({\Gamma}(C(X)))$. If $f_i = g_j$ for some $i,j \in \{1,2\}$, then $Z(f_i) \cap Z(g_j) = Z(f_i) \neq \phi$. So assume that $f_i \neq g_j$  for all $i,j \in \{1,2\}$. Consider a path $[f_1,f_2] - [h_1,h_2] - [g_1,g_2]$ in $L({\Gamma}(C(X)))$, then $h_1 = f_i$ for some $i \in \{1,2\}$ and $h_2 = g_j$ for some $j \in\{1,2\}$. Thus $Z(f_i) \cap Z(g_j) = Z(h_1) \cap Z(h_2) \neq \phi$.\\
		Conversely, suppose $Z(f_i) \cap Z(g_j) \neq \phi$ for some $i,j \in \{1,2\}$. Without loss of generality let $f_1 \neq g_1$, then $[f_1,g_1]$ is adjacent to both $[f_1,f_2]$ and $[g_1,g_2]$ in $L({\Gamma}(C(X)))$. If $f_1 = g_1$ then there exists $ r \in \mathbb{R} \setminus \{0,1\}$ such that $g_2 \neq rg_1$ and $f_2 \neq rf_1$. Thus $[f_1, rf_1]$ is adjacent to both $[f_1,f_2]$ and $[g_1,g_2]$.
		
	\end{proof}

	\begin{theorem}
		Let $|X| > 2$ and $[f_1,f_2], ~ [g_1,g_2]$ be distinct vertices in $L({\Gamma}(C(X)))$. Then\\
		(1) $d([f_1,f_2], [g_1,g_2]) = 1$ if and only if $f_i = g_j$ for some $i,j \in \{1,2\}$.\\
		(2) $d([f_1,f_2], [g_1,g_2]) = 2$ if and only if $f_i \neq g_j$ for all $i,j \in \{1,2\}$ and $Z(f_i) \cap Z(g_j) \neq \phi$ for some $i,j \in \{1,2\}$.\\
		(3) $d([f_1,f_2], [g_1,g_2]) = 3$ if and only if $f_i \neq g_j$ for all $i,j \in \{1,2\}$ and $Z(f_i) \cap Z(g_j) = \phi$ for all $i,j \in \{1,2\}$.
	\end{theorem}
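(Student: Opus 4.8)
The plan is to recognise that parts (1), (2), (3) describe a partition of the set of pairs of distinct vertices into three mutually exclusive and exhaustive cases, and to pin down the distance in each. Part (1) is immediate from the definition of the line graph: $[f_1,f_2]$ and $[g_1,g_2]$ are adjacent, i.e. $d = 1$, exactly when the corresponding edges of $\Gamma(C(X))$ meet in a common endpoint, that is $f_i = g_j$ for some $i,j \in \{1,2\}$. First I would record this, so that the negation ``$f_i \neq g_j$ for all $i,j$'' becomes the standing hypothesis underlying both (2) and (3).

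For part (2) I would simply combine (1) with Lemma 5.1. If $d = 2$, the two vertices are non-adjacent, so $f_i \neq g_j$ for all $i,j$ by (1), and they possess a common neighbour, so $Z(f_i) \cap Z(g_j) \neq \phi$ for some $i,j$ by Lemma 5.1. Conversely, $f_i \neq g_j$ for all $i,j$ forces $d \neq 1$, while $Z(f_i) \cap Z(g_j) \neq \phi$ for some $i,j$ supplies a common neighbour through Lemma 5.1 and hence a walk of length $2$, giving $d \le 2$, whence $d = 2$.

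The substance of the theorem is part (3), where the real work is to exhibit an actual path of length $3$; the lower bound $d \ge 3$ comes for free, since $f_i \neq g_j$ for all $i,j$ gives $d \neq 1$ by (1), and $Z(f_i) \cap Z(g_j) = \phi$ for all $i,j$ rules out a common neighbour via Lemma 5.1, so $d \neq 2$. To build the path I would bridge the endpoints $f_1$ and $g_1$, which are distinct non-units with $Z(f_1) \cap Z(g_1) = \phi$. Choosing $a \in Z(f_1)$, $b \in Z(g_1)$ (so $a \neq b$) and, using $|X| > 2$, a third point $c \notin \{a,b\}$, complete regularity yields $u,v \in C(X)$ with $u(a) = 0 = v(b)$ and $u(c) = v(c) = 1$; then $h = uv$ satisfies $h(a) = h(b) = 0$ and $h(c) = 1 \neq 0$, so $a \in Z(f_1) \cap Z(h)$ and $b \in Z(h) \cap Z(g_1)$. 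Thus $[f_1,h]$ and $[h,g_1]$ are genuine vertices and
\[
[f_1,f_2] - [f_1,h] - [h,g_1] - [g_1,g_2]
\]
is a walk of length $3$, with consecutive terms sharing $f_1$, $h$, $g_1$ respectively.

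The hard part is not the existence of $h$ but verifying that this walk is an honest path on four distinct line-graph vertices; this is the only place I expect friction. Most of the needed inequalities are automatic from $f_i \neq g_j$: for instance $[f_1,f_2] = [h,g_1]$ would force $g_1 \in \{f_1,f_2\}$, and similarly for the other non-adjacent pairs. The genuine obstructions are the coincidences $h \in \{f_1,f_2,g_1,g_2\}$, which could collapse $[f_1,h]$ or $[h,g_1]$. Since $h \neq 0$ (as $h(c) = 1$), every nonzero scalar multiple $th$ has the same zero set and bridges $f_1$ and $g_1$ equally well, while each equation $th = f_1$, $th = f_2$, $th = g_1$, $th = g_2$ holds for at most one value of $t$; picking $t$ outside these finitely many values produces a bridging function distinct from all four, turning the walk into a path and giving $d \le 3$, hence $d = 3$. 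Finally I would observe that the three hypotheses exhaust all configurations of a pair of distinct vertices, so the computed distances deliver the three biconditionals at once.
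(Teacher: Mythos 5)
Your proposal is correct and follows essentially the same route as the paper: part (1) from the definition of the line graph, part (2) from the common-neighbour criterion (Lemma 5.1), and part (3) via the lower bound from (1)--(2) together with the path $[f_1,f_2]-[f_1,h]-[h,g_1]-[g_1,g_2]$ through a bridging vertex $h$ adjacent to both $f_1$ and $g_1$. The only difference is one of detail: where the paper cites Corollary 2.2 and asserts ``clearly we can choose $h \notin \{f_2,g_2\}$,'' you inline the completely regular construction of $h$ and justify the required distinctness by replacing $h$ with a suitable scalar multiple $th$, which actually fills in a step the paper leaves unargued.
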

	
	\begin{proof}
		(1) Clearly holds.\\
		(2) Suppose $d([f_1,f_2], [g_1,g_2]) = 2$. Then  $[f_1,f_2]-[f_i, g_j]- [g_1,g_2]$ is a path of length 2 for any $i,j \in \{1,2\}$ implies that $f_i \neq g_j$ for all $i,j \in \{1,2\}$ and $Z(f_i) \cap Z(g_j) \neq \phi$ for some $i,j \in \{1,2\}$.\\
		The converse follows clearly.  \\ 
		(3) From (1) and (2) we have $d([f_1,f_2], [g_1,g_2]) > 2$. By corollary 2.2, there exists $h \in  V({\Gamma}(C(X)))$ such that $h$ is adjacent to both $f_1$ and $g_1$ in  ${\Gamma}(C(X))$. Clearly, we can choose $h$ such that $ h \notin \{f_2,g_2\}$. Thus $[f_1,f_2] - [f_1,h] - [h,g_1] - [g_1,g_2]$ is  a path in $L({\Gamma}(C(X)))$ and hence $d([f_1,f_2], [g_1,g_2]) = 3$
	\end{proof}	
	
	The following corollary is immediate theorem 2.1 and theorem 5.2.
	
	\begin{corollary}
		If $|X| > 2$, then $L({\Gamma}(C(X)))$ is a connected graph with $diam(L({\Gamma}(C(X)))$ $\leq 3$.
	\end{corollary}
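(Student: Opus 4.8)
The plan is to read off the result directly from the distance trichotomy of Theorem 5.2, using Theorem 2.1 (through Corollary 2.2) only to guarantee finiteness in the worst case. First I would take two arbitrary distinct vertices $[f_1,f_2]$ and $[g_1,g_2]$ of $L(\Gamma(C(X)))$ and observe that exactly one of three mutually exclusive, jointly exhaustive situations holds: either $f_i = g_j$ for some $i,j \in \{1,2\}$; or $f_i \neq g_j$ for all $i,j$ while $Z(f_i) \cap Z(g_j) \neq \phi$ for some $i,j$; or $f_i \neq g_j$ for all $i,j$ and $Z(f_i) \cap Z(g_j) = \phi$ for all $i,j$. This is just a logical case split, first on whether the two edges share an endpoint and, failing that, on whether any pair of their zero-sets meets; these are precisely cases (1), (2) and (3) of Theorem 5.2.

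Next I would apply Theorem 5.2 to each case to conclude that the distance $d([f_1,f_2],[g_1,g_2])$ equals $1$, $2$ or $3$ respectively. Since every pair of distinct vertices is thereby assigned a finite distance, $L(\Gamma(C(X)))$ is connected, and since the largest value that can occur is $3$, we obtain $diam(L(\Gamma(C(X)))) \leq 3$. The whole argument is just the assembly of the three cases of Theorem 5.2 into one statement.

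I would stress that the hypothesis $|X| > 2$ enters exactly in case (3): the length-$3$ path exhibited in the proof of Theorem 5.2(3) depends on Corollary 2.2 (a consequence of Theorem 2.1 for $|X| > 2$) to supply a function $h$ adjacent in $\Gamma(C(X))$ to both $f_1$ and $g_1$. For $|X| \leq 2$ the base graph $\Gamma(C(X))$ need not even be connected, so such an $h$ may fail to exist and the diameter bound would collapse. There is no real obstacle in the proof; the only point requiring a moment's care is checking that the three cases genuinely cover all possibilities for a pair of distinct vertices, which the case split above makes routine.
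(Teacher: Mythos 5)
Your proposal is correct and is essentially the paper's own argument: the paper states the corollary is immediate from Theorem 2.1 and Theorem 5.2, and your write-up simply unpacks that by noting the three cases of Theorem 5.2 are exhaustive and mutually exclusive, so every pair of distinct vertices has distance $1$, $2$, or $3$. Your observation that $|X| > 2$ enters only through Corollary 2.2 inside case (3) matches the paper's implicit reasoning exactly.
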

	
%	\begin{proof}
%		If $|X| = 2$, then by theorem 2.2  $\GammaC(X)$ is disconnected.  Thus, $L({\Gamma}(C(X)))$ is disconnected.	If $|X| > 2$, then the result follows from Theorem 2.2.
%	\end{proof}	
	
	\begin{theorem}
		Let $|X| > 2$ and $[f_1,f_2]$ be a vertex in $L({\Gamma}(C(X)))$. Then 
		\begin{equation*}
			e([f_1,f_2]) =
			\begin{cases}
				2, ~~~~~~if ~ Z(f_1) \cup Z(f_2) = X\\
				3, ~~~~~~otherwise.
				
			\end{cases}
		\end{equation*}
	\end{theorem}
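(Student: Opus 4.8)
The plan is to pin the eccentricity between $2$ and $3$ and then decide its exact value by the dichotomy on whether $Z(f_1)\cup Z(f_2)$ exhausts $X$. Write $W=Z(f_1)\cup Z(f_2)$, which is closed in $X$ as a union of two zero-sets. By Corollary 5.3 every vertex of $L(\Gamma(C(X)))$ lies within distance $3$ of $[f_1,f_2]$, so $e([f_1,f_2])\le 3$ in all cases, and it remains only to locate the largest distance actually attained. The reformulation I will lean on is that the distance-$3$ criterion of Theorem 5.2(3), namely $Z(f_i)\cap Z(g_j)=\emptyset$ for all $i,j$, is exactly the condition $W\cap\bigl(Z(g_1)\cup Z(g_2)\bigr)=\emptyset$.

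First I would treat the case $W=X$ and show no vertex sits at distance $3$. For any vertex $[g_1,g_2]$ the set $Z(g_1)$ is nonempty, and since $W=X$ any point of $Z(g_1)$ lies in $Z(f_1)$ or $Z(f_2)$; hence $Z(f_i)\cap Z(g_1)\neq\emptyset$ for some $i$, so the criterion of Theorem 5.2(3) fails and $e([f_1,f_2])\le 2$. To see this bound is attained I exhibit a non-adjacent vertex: fix $a\in Z(f_1)$ and, using complete regularity and $|X|>1$, choose a nonzero $p\in C(X)$ with $p(a)=0$. Then $p,2p,3p,4p$ are distinct functions all vanishing at $a$, so at least two of them, say $g_1$ and $g_2$, differ from both $f_1$ and $f_2$; now $a\in Z(g_1)\cap Z(g_2)$ makes $[g_1,g_2]$ a vertex with $g_i\neq f_j$ for all $i,j$, whence $d([f_1,f_2],[g_1,g_2])=2$ by Theorem 5.2(2). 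Thus $e([f_1,f_2])=2$.

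Next I would treat $W\neq X$ and build a vertex at distance $3$. Choosing $y\in X\setminus W$, complete regularity (with $W$ closed and $y\notin W$) furnishes $u\in C(X)$ with $u(y)=0$ and $u\equiv 1$ on $W$, so that $\emptyset\neq Z(u)\subseteq X\setminus W$. Setting $g_1=u$ and $g_2=2u$ gives distinct functions with $Z(g_1)=Z(g_2)=Z(u)\ni y$, so $[g_1,g_2]$ is a vertex. Because $Z(f_i)\subseteq W$ while $Z(g_j)\subseteq X\setminus W$, every intersection $Z(f_i)\cap Z(g_j)$ is empty; in particular each $g_j$ has a nonempty zero-set disjoint from $W\supseteq Z(f_i)$, which forces $g_j\neq f_i$ for all $i,j$. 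Theorem 5.2(3) then gives $d([f_1,f_2],[g_1,g_2])=3$, so $e([f_1,f_2])\ge 3$, and combined with the upper bound $e([f_1,f_2])=3$.

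The verifications that the constructed pairs meet the hypotheses of Theorem 5.2 are routine; the only genuine content, and the step I expect to require the most care, is the construction of the distance-$3$ vertex in the second case and the recognition that $W\neq X$ is precisely the condition that makes such a construction possible through complete regularity, with the matching observation in the first case that $W=X$ obstructs it.
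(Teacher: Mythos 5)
Your proof is correct and follows essentially the same route as the paper: the same dichotomy on whether $Z(f_1)\cup Z(f_2)=X$, the same appeal to the distance classification (Theorem 5.2) together with the diameter bound, and the same complete-regularity construction of functions vanishing at a point $y\notin Z(f_1)\cup Z(f_2)$ and equal to $1$ on $Z(f_1)\cup Z(f_2)$ to realize distance $3$ (your choice $g_1=u$, $g_2=2u$ is just a cleaner version of the paper's two functions). The one genuine improvement is that in the case $Z(f_1)\cup Z(f_2)=X$ you explicitly construct a non-adjacent vertex to show the eccentricity is exactly $2$ rather than at most $2$, a step the paper silently omits.
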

	
	\begin{proof}
		It is clear that $1 \leq e([f_1,f_2]) \leq 3$. Let $[g_1,g_2]$ be a vertex in  $L({\Gamma}(C(X)))$ such that $[g_1,g_2]$ is not adjacent to $[f_1,f_2]$. Thus for all $i,j \in \{1,2\}$, $g_i \neq f_j$. But $\phi \neq Z(g_1) = Z(g_1) \cap X = Z(g_1) \cap (Z(f_1) \cup Z(f_2))$. Suppose,  $Z(f_1) \cap Z(g_1) \neq \phi$ then, $[f_1,f_2] - [f_1,g_1] - [g_1,g_2]$ is a path in $L({\Gamma}(C(X)))$ and so $d([f_1,f_2], [g_1,g_2]) = 2$. Hence,  $e([f_1,f_2]) =2$. Now, suppose $y \in X \setminus Z(f_1) \cup Z(f_2)$ and $V$ be an open set in $X$ such that $y \in V \subseteq Cl_X{V} \subseteq X \setminus Z(f_1) \cup Z(f_2)$. Consider $g_1, g_2 \in C(X)$, $g_1\neq g_2$ such that $g_i(y) = 0$ and $g_i(Z(f_1) \cup Z(f_2)) = 1$ for $i \in \{1,2\}$. Then $f_i \neq g_j$ and $Z(f_i) \cap Z(g_j) = \phi$ for all $i,j \in \{1,2\}$. So by theorem 2.1, $d([f_1,f_2], [g_1,g_2]) = 3$ and hence $e([f_1,f_2]) = 3$.
	\end{proof}	
	
	An immediate conclusion from corollary 2.2 and theorem 2.4 is the following corollary.
	
	\begin{corollary}
		If $|X| > 2$, then $2 \leq Rad(L({\Gamma}(C(X)))) \leq 3$.
	\end{corollary}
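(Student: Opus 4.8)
The plan is to identify the radius with the minimum eccentricity taken over all vertices and then read the two bounds off the eccentricity computation already carried out in Theorem 5.4. By definition,
$Rad(L({\Gamma}(C(X)))) = \min\{e([f_1,f_2]) : [f_1,f_2] \in V(L({\Gamma}(C(X))))\}$,
so it suffices to determine the range of values attained by the eccentricity function on $L({\Gamma}(C(X)))$.

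For the upper bound I would simply invoke Theorem 5.4, which shows that every vertex $[f_1,f_2]$ satisfies $e([f_1,f_2]) \in \{2,3\}$; in particular each eccentricity is at most $3$, so the minimum is at most $3$ and $Rad(L({\Gamma}(C(X)))) \leq 3$. (Alternatively, Corollary 5.3 gives $diam(L({\Gamma}(C(X)))) \leq 3$, and the radius never exceeds the diameter.)

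For the lower bound I would show that no vertex has eccentricity $1$, i.e.\ that no single edge of ${\Gamma}(C(X))$ is incident with every other edge. Fix a vertex $[f_1,f_2]$. Exploiting $|X| > 2$ and complete regularity, exactly as in the construction underlying Theorem 2.1 and Corollary 2.2, I can produce functions $k_1, k_2 \in V({\Gamma}(C(X)))$ with $Z(k_1) \cap Z(k_2) \neq \phi$ and $k_i \notin \{f_1,f_2\}$ for each $i \in \{1,2\}$. Then $[k_1,k_2]$ is a vertex of $L({\Gamma}(C(X)))$ sharing no coordinate with $[f_1,f_2]$, hence not adjacent to it, so $e([f_1,f_2]) \geq 2$. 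Since $[f_1,f_2]$ was arbitrary, taking the minimum over all vertices gives $Rad(L({\Gamma}(C(X)))) \geq 2$. Combining the two estimates yields $2 \leq Rad(L({\Gamma}(C(X)))) \leq 3$.

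I expect no genuine obstacle here: the only step deserving any care is the lower bound, and even that reduces, via Theorem 5.4, to the automatic inequality $e \geq 2$, the constructive argument above being just a restatement that every edge admits a coordinate-disjoint companion edge. In short, the corollary is essentially a repackaging of Theorem 5.4, and the two-line argument above is what I would record.
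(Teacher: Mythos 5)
Your proof is correct and takes essentially the same route as the paper, which states this corollary as an immediate consequence of the eccentricity computation in Theorem 5.4: since every vertex has eccentricity $2$ or $3$, the radius, being the minimum eccentricity, satisfies $2 \leq Rad(L({\Gamma}(C(X)))) \leq 3$. Your separate constructive argument for the lower bound is harmless but redundant, since Theorem 5.4 already excludes eccentricity $1$.
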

	
	We now find the girth of $L({\Gamma}(C(X)))$ and show that $L({\Gamma}(C(X)))$ is always triangulated and hypertriangulated. We also show that $L({\Gamma}(C(X)))$ is never chordal.
	
	\begin{theorem}
		If $|X| > 1$, then $gr(L({\Gamma}(C(X)))) = 3$.
	\end{theorem}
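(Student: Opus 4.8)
The plan is to reduce the claim to exhibiting a single triangle in $L({\Gamma}(C(X)))$. Since every cycle in a simple graph has length at least $3$, once I produce one $3$-cycle in $L({\Gamma}(C(X)))$ I will have shown $gr(L({\Gamma}(C(X)))) \le 3$, and combined with the trivial lower bound this yields $gr(L({\Gamma}(C(X)))) = 3$. So the entire problem collapses to finding three pairwise adjacent vertices of the line graph.

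The natural source of such a triangle is a triangle already living in ${\Gamma}(C(X))$. Recall from Theorem 3.1 that ${\Gamma}(C(X))$ is triangulated via the cycle $f-2f-3f-f$. First I would fix a \emph{nonzero} vertex $f \in V({\Gamma}(C(X)))$; such an $f$ exists because $|X|>1$ allows me, by complete regularity, to choose a function vanishing at one point and taking a nonzero value at another, so that $Z(f)\neq\phi$, whence $f\in V({\Gamma}(C(X)))$, while $f\neq 0$. For this choice the three functions $f,2f,3f$ are pairwise distinct and satisfy $Z(f)=Z(2f)=Z(3f)\neq\phi$, so they are pairwise adjacent in ${\Gamma}(C(X))$.

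Now I pass to the line graph. The three edges $[f,2f]$, $[2f,3f]$ and $[3f,f]$ are genuine vertices of $L({\Gamma}(C(X)))$, and any two of them share a common endpoint (namely $2f$, $3f$, and $f$ respectively), so by the defining adjacency of the line graph each pair is adjacent. Thus $[f,2f]-[2f,3f]-[3f,f]-[f,2f]$ is a $3$-cycle in $L({\Gamma}(C(X)))$, which gives the desired bound.

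I do not anticipate a serious obstacle here: the argument is essentially the observation that the edges of any triangle of a graph form a triangle of its line graph. The only point demanding care is the distinctness of $f,2f,3f$ — equivalently the insistence that $f\neq 0$ — since otherwise these three would collapse to a single vertex and no triangle would result; this is precisely why I invoke $|X|>1$ to produce a nonzero vertex at the outset.
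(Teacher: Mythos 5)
Your proof is correct and follows essentially the same approach as the paper: both exhibit an explicit triangle in $L({\Gamma}(C(X)))$ whose vertices are edges among scalar multiples of a single function $f$. If anything, your version (lifting the triangle $f-2f-3f$ of Theorem 3.1 and checking distinctness via $f\neq 0$) is slightly more careful than the paper's cycle $[f,g]-[g,2f]-[2f,f]$, which tacitly assumes $g\neq 2f$.
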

	
	\begin{proof}
		Let $[f,g]$ be a vertex in $L({\Gamma}(C(X)))$, then $[f,g] - [g,2f] - [2f,f] - [f,g]$ is a cycle of length $3$. Hence, $gr(L({\Gamma}(C(X)))) = 3$.
	\end{proof}
	
	\begin{theorem}
		For any space $X$ with $|X| > 1$,  $L({\Gamma}C(X)) $ is both triangulated and hypertriangulated .
	\end{theorem}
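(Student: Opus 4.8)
The plan is to exploit the elementary observation that every triangle of $\Gamma(C(X))$ induces a triangle of its line graph: if $\{u,v,w\}$ spans a triangle in $\Gamma(C(X))$, then the three edges $[u,v]$, $[v,w]$, $[w,u]$ are pairwise incident and hence are pairwise adjacent vertices of $L(\Gamma(C(X)))$. More generally, adjacency in $L(\Gamma(C(X)))$ is precisely incidence of edges of $\Gamma(C(X))$, so I would translate each assertion about $L(\Gamma(C(X)))$ into a statement about edges of $\Gamma(C(X))$ and then appeal to the structure of $\Gamma(C(X))$ already established, in particular to the hypertriangulation furnished by Theorem 3.1 and to the same scalar-multiple trick used in Theorem 5.6. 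The two claims, being triangulated and being hypertriangulated, are handled separately.

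For the triangulated part, let $[f,g]$ be an arbitrary vertex of $L(\Gamma(C(X)))$, so that $\{f,g\}$ is an edge of $\Gamma(C(X))$. Since $\Gamma(C(X))$ is hypertriangulated by Theorem 3.1, this edge lies in some triangle $\{f,g,h\}$ of $\Gamma(C(X))$. By the observation above, $[f,g]$, $[g,h]$, $[h,f]$ form a triangle of $L(\Gamma(C(X)))$ containing the prescribed vertex $[f,g]$. As $[f,g]$ was arbitrary, $L(\Gamma(C(X)))$ is triangulated.

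For the hypertriangulated part I would begin from an arbitrary edge of $L(\Gamma(C(X)))$, joining distinct vertices $[f_1,f_2]$ and $[g_1,g_2]$. By the adjacency rule these two edges of $\Gamma(C(X))$ share exactly one endpoint (they cannot share both, since the graph is simple and the two vertices are distinct), so after relabelling I may write them as $[a,h]$ and $[h,b]$, where $h$ is the common function and $a,b,h$ are pairwise distinct. The aim is to produce a third vertex adjacent to both, and the natural candidate is $[h,c]$ for a neighbour $c$ of $h$ in $\Gamma(C(X))$ with $c \notin \{a,b\}$: such $[h,c]$ shares the function $h$ with each of $[a,h]$ and $[h,b]$, hence is adjacent to both and differs from both, yielding the triangle $[a,h]-[h,b]-[h,c]-[a,h]$ that contains the chosen edge.

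The only genuine obstacle is guaranteeing a neighbour $c$ of $h$ outside $\{a,b\}$, i.e. that $h$ has at least three neighbours in $\Gamma(C(X))$. I expect to settle this by a short case split: if $h \neq 0$ then $2h,3h,4h$ are three distinct non-units with the same nonempty zero-set as $h$, hence three distinct neighbours of $h$, at least one of which avoids the two forbidden values $a,b$; if $h = 0$ then $Z(h)=X$ meets every nonempty zero-set, so every nonzero non-unit is a neighbour, and as $|X|>1$ there are infinitely many of these, again leaving room to choose $c \notin \{a,b\}$. Disposing of this degenerate zero-function case, together with the bookkeeping that keeps $a,b,h,c$ pairwise distinct, is the main point requiring care; everything else is a direct translation through the edge-to-triangle correspondence.
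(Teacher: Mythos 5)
Your proof is correct and takes essentially the same route as the paper: both arguments obtain the triangle through a given edge of $L({\Gamma}(C(X)))$ by producing a third edge of ${\Gamma}(C(X))$ through the shared endpoint (the paper takes $[f_1,2f_1]$, you take $[h,c]$ for a suitably chosen neighbour $c$), and both get the triangulated part from the fact that every edge of ${\Gamma}(C(X))$ lies in a triangle. If anything, your case analysis is more careful than the paper's, which silently assumes $f_1 \neq 0$ and $2f_1 \notin \{f_2, g\}$; your scalar-multiple argument with $2h,3h,4h$ and the separate treatment of $h=0$ closes exactly those gaps.
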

	
	\begin{proof}
		By theorem 5.6, it follows that $L({\Gamma}(C(X))) $ is triangulated. Let $[f_1,f_2] - [f_1,g]$ be an edge in $L({\Gamma}(C(X))) $. Then, $f_1 \neq 2f_1$ and $f_2 \neq 2f_1$ and $[f_1,f_2] - [f_1,g] - [f_1,2f_1] - [f_1,f_2]$ is a cycle in $L({\Gamma}(C(X))) $. Hence, $L({\Gamma}(C(X))) $ is hypertriangulated.
	\end{proof}

	\begin{theorem}
		If $[f_1,f_2]$ and $[g_1,g_2]$ are distinct vertices in $L({\Gamma}(C(X))) $. Then\\
		(1) $c([f_1,f_2],[g_1,g_2]) = 3$ if and only if $f_i = g_j$ for some $i,j \in \{1,2\}$.\\
		(2)  $c([f_1,f_2],[g_1,g_2]) = 4$ if and only if $f_i \neq g_j$ for all $i,j \in \{1,2\}$ and $Z(f_i) \cap Z(g_j) \neq \phi$ for some $i \in \{1,2\} ~and~ for~ all~ j \in \{1,2\}$ or ($Z(f_1) \cap Z(g_i) \neq \phi$ and $Z(f_2) \cap Z(g_j) \neq \phi$, where $i,j \in \{1,2\}$).\\
		(3) $c([f_1,f_2],[g_1,g_2]) = 5$ if and only if $f_i \neq g_j$ for all $i,j \in \{1,2\}$ and for only one $i \in \{1,2\}$ and only one $j \in \{1,2\}$, $Z(f_i) \cap Z(g_j) \neq \phi$.\\
		(4)  $c([f_1,f_2],[g_1,g_2]) = 6$ if and only if $f_i \neq g_j$ for all $i,j \in \{1,2\}$ and $Z(f_i) \cap Z(g_j) = \phi$ for all $i,j \in \{1,2\}$.
	\end{theorem}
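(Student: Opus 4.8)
The plan is to reduce everything to counting the common neighbours of the two vertices and then to invoke the standard fact that a shortest cycle through two vertices $u,v$ decomposes, upon deleting $u$ and $v$, into two internally vertex-disjoint paths joining $u$ and $v$; consequently its length is the minimum total length of such a pair of paths, and in particular every cycle through $u,v$ has length at least $2\,d(u,v)$. First I would record the lower bounds. Since $L(\Gamma(C(X)))$ is hypertriangulated (Theorem 5.7), every edge lies on a triangle, so a cycle of length $3$ through both vertices exists exactly when they are adjacent, i.e. (by Theorem 5.2(1)) exactly when $f_i = g_j$ for some $i,j$; this settles (1) and shows $c \ge 4$ in cases (2)--(4). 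The key structural observation, read off from Lemma 5.1, is that when $f_i \neq g_j$ for all $i,j$ the common neighbours of $[f_1,f_2]$ and $[g_1,g_2]$ are precisely the vertices $[f_i,g_j]$ with $Z(f_i)\cap Z(g_j)\neq\phi$, and distinct index pairs give distinct such vertices. Hence the number of common neighbours equals the number of the four intersections $Z(f_i)\cap Z(g_j)$ that are non-empty.

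Next I would handle the two distance-two situations, cases (2) and (3). A $4$-cycle through two non-adjacent vertices forces them to be opposite on the cycle, hence requires two distinct common neighbours; conversely two distinct common neighbours $[f_i,g_j]\neq[f_{i'},g_{j'}]$ at once produce the $4$-cycle
\[
[f_1,f_2]-[f_i,g_j]-[g_1,g_2]-[f_{i'},g_{j'}]-[f_1,f_2].
\]
Thus $c = 4$ iff at least two of the four intersections are non-empty, and a short case check identifies this with the displayed disjunctive condition in (2). If instead exactly one intersection, say $Z(f_1)\cap Z(g_1)$, is non-empty, there is a unique common neighbour and no $4$-cycle, so $c\ge 5$. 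To realise $c=5$ I would pick $a\in Z(f_1)\cap Z(g_1)$ and a function $h\notin\{f_1,f_2,g_1,g_2\}$ with $a\in Z(h)$; then
\[
[f_1,f_2]-[f_1,g_1]-[g_1,g_2]-[h,g_1]-[f_1,h]-[f_1,f_2]
\]
is a genuine $5$-cycle, its second arc being internally disjoint from the first precisely because $h\neq f_1$ and $h\neq g_1$. This gives (3).

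For case (4) we have $f_i\neq g_j$ for all $i,j$ and all four intersections empty, so by Theorem 5.2(3) the distance is $3$ and every cycle through the two vertices has length at least $6$. To attain $6$ I would use Corollary 2.2 to produce, inside $\Gamma(C(X))$, two distinct functions $h$ and $h'=2h$ (so $Z(h')=Z(h)$), each adjacent to both $f_1$ and $g_1$ and chosen outside $\{f_1,f_2,g_1,g_2\}$, and then run two copies of the length-$3$ connector from the proof of Theorem 5.2(3) in parallel:
\[
[f_1,f_2]-[f_1,h]-[h,g_1]-[g_1,g_2]-[h',g_1]-[f_1,h']-[f_1,f_2].
\]
Since $h\neq h'$ the two arcs meet only at the endpoints, so this is a genuine $6$-cycle and $c=6$.

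Finally I would collect the converses for free: the four hypotheses are mutually exclusive and jointly exhaustive for a pair of distinct vertices, and the forward implications attach to them the four distinct values $3,4,5,6$, so every implication reverses automatically. I expect the only real obstacles to be bookkeeping: verifying that the ``some $i$, all $j$ / both $f_1$ and $f_2$ meet some $g$'' condition in (2) is genuinely equivalent to ``at least two of the four intersections are non-empty,'' and confirming in the $c=5$ and $c=6$ constructions that the auxiliary functions can be taken distinct from the four given ones and from one another, so that the two arcs share only their endpoints and the displayed cycles have the claimed lengths.
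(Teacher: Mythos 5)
Your proposal is correct, and its skeleton matches the paper's: the same four-way classification, essentially the same explicit cycle constructions (scalar multiples of the given functions for lengths $3$, $4$, $5$, and the Corollary 2.2 connector run twice in parallel for length $6$), and the same device of getting every ``only if'' direction for free from the mutual exclusivity and exhaustiveness of the four hypotheses. Where you genuinely depart is in how the lower bounds are obtained. The paper proves the forward direction of (2) by enumerating the possible shapes of a $4$-cycle (its Cases I--III), proves the forward direction of (3) by deriving a contradiction from a hypothetical $5$-cycle when all four intersections are empty, and rules out $c\le 5$ in (4) by exclusion from (1)--(3). You instead isolate one structural fact: when $f_i\neq g_j$ for all $i,j$, the common neighbours of $[f_1,f_2]$ and $[g_1,g_2]$ are exactly the admissible pairs $[f_i,g_j]$, and distinct index pairs give distinct vertices, so the number of common neighbours equals the number of nonempty intersections $Z(f_i)\cap Z(g_j)$. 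This single observation yields the $4$-cycle criterion in (2), the bound $c\ge 5$ in (3), and --- combined with the standard fact that a cycle through two vertices splits into two internally disjoint paths, hence has length at least $2d(u,v)$, together with Theorem 5.2(3) --- the bound $c\ge 6$ in (4); you also invoke hypertriangulation (Theorem 5.7) for (1) where the paper builds the triangle by hand. Your organization is more systematic and less slip-prone (the paper's case analysis in (2) and its $5$-cycle argument in (3) are exactly where bookkeeping errors are easiest), while the paper's argument is more self-contained in that it never appeals to the general $c\ge 2d(u,v)$ fact. One caveat you share with the paper rather than introduce: cases (3) and (4) lean on Corollary 2.2 and Theorem 5.2(3), which require $|X|>2$, a hypothesis absent from the theorem statement; when $|X|=2$, two vertices with all four zero-set intersections empty lie in different components of $L(\Gamma(C(X)))$, so no cycle through both exists at all.
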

	
	\begin{proof}
		(1). If $c([f_1,f_2],[g_1,g_2]) = 3$, then it is clear that $f_i = g_j$ for some $i,j \in \{1,2\}$.\\
		Conversely, suppose $f_1 = g_1$. Then there exists $r \in \mathbb{R} \setminus \{0\}$ such that $rg_1 \notin \{f_1,f_2,g_1\}$ and $[f_1,f_2] - [f_1,g_1] - [f_1,rg_1] - [f_1,f_2]$ is a cycle of length 3 in $L({\Gamma}(C(X))) $ containing $[f_1,f_2]$ and $[g_1,g_2]$. Hence,  $c([f_1,f_2],[g_1,g_2]) = 3$.\\
		
		(2). Suppose   $c([f_1,f_2],[g_1,g_2]) = 4$ then by (1), $f_i \neq g_j$ for all $i,j \in \{1,2\}$. So we have the cycle $[f_1,f_2] - [a,b] - [g_1,g_2] - [c,d] - [f_1,f_2]$, where $a,c \in \{f_1,f_2\}$ and $b,d \in \{g_1,g_2\}$. \\
		$\textbf{Case I}$ : If $a = f_1 = c$, $b = g_1$ and $ d = g_2$ then $Z(f_1) \cap Z(g_1) \neq \phi$ and $Z(f_1) \cap Z(g_2) \neq \phi$.\\
		$\textbf{Case II}$ : If $a = f_1$, $c = f_2$, $b = g_1$ and $d = g_2$, then $Z(f_1) \cap Z(g_1) \neq \phi$ and $Z(f_2) \cap Z(g_2) \neq \phi$. \\
			$\textbf{Case III}$ : If $a = f_1 = c$ and $b = g_2 = d$, then $[a,b] = [c,d]$, which is a contradiction.\\
		Conversely, if $f_i \neq g_j$ for all $i,j \in \{1,2\}$ then  there is no triangle containing $[f_1,f_2]$ and $[g_1,g_2]$. Suppose $Z(f_2) \cap Z(g_j) \neq \phi$ for all $j \in \{1,2\}$ then $[f_1,f_2] - [f_2,g_2] - [g_1,g_2] - [g_1,f_2] - [f_1,f_2]$ is a cycle of length $4$ in $L({\Gamma}(C(X))) $ containing  $[f_1,f_2]$ and $[g_1,g_2]$. Suppose $Z(f_1) \cap Z(g_2) \neq \phi$ and $Z(f_2) \cap Z(g_2) \neq \phi$ then we get a cycle $[f_1,f_2] - [f_1,g_2] - [g_1,g_2] - [g_2,f_2] - [f_1,f_2] $ of length $4$ in $L({\Gamma}(C(X))) $. Hence, $c([f_1,f_2],[g_1,g_2]) = 4$.\\
		
		(3). Suppose $c([f_1,f_2],[g_1,g_2]) = 5$. Then by (1), $f_i \neq g_j$ for all $i,j \in \{1,2\}$. Suppose $Z(f_i) \cap Z(g_j) = \phi$ for all $i,j \in \{1,2\}$ then, $[f_1,f_2] - [k_1,l_1] - [g_1,g_2] - [k_2,l_2]-[k_3,l_3] - [f_1,f_2]$ is a cycle of length $5$ in $L({\Gamma}(C(X))) $, where $k_1 \in \{f_1,f_2\}$ and $l_1 \in \{g_1,g_2\}$. But $Z(k_1) \cap Z(l_1) \neq \phi$ which is a contradiction to our assumption. Similarly, if we consider the cycle $[f_1,f_2] - [k_1,l_1] - [k_2,l_2] - [g_1,g_2] - [k_3,l_3] - [f_1,f_2]$, we get a contradiction. Hence, for only one $i \in \{1,2\}$ and only one $j \in \{1,2\}$, $Z(f_i) \cap Z(g_j) \neq \phi$.\\
		Conversely, suppose $f_i \neq g_j$ for all $i,j \in \{1,2\}$ and for only one $i \in \{1,2\}$ and only one $j \in \{1,2\}$, $Z(f_i) \cap Z(g_j) \neq \phi$. Let $Z(f_2) \cap Z(g_2) \neq \phi$. Then by (1) and (2) it follows that there is no cycle of length 3 or 4 containing $[f_1,f_2]$ and $[g_1,g_2]$. Now there exists $r \in \mathbb{R} \setminus \{0\}$ such that $rg_2 \notin \{f_1,g_1,g_2\}$ and so $[f_1,f_2] - [f_2,g_2] - [g_1,g_2] - [g_2,rg_2] - [rg_2,f_2] - [f_1,f_2]$ is a cycle of length $5$ in $L({\Gamma}(C(X))) $ containing $[f_1,f_2]$ and $[g_1,g_2]$. Hence,  $c([f_1,f_2],[g_1,g_2]) = 5$.\\
		
		(4). Suppose $c([f_1,f_2],[g_1,g_2]) = 6$. Then by (1), (2) and (3), $f_i \neq g_j$ for all $i,j \in \{1,2\}$ and $Z(f_i) \cap Z(g_j) = \phi$ for all $i,j \in \{1,2\}$.\\
		Conversely, suppose  $f_i \neq g_j$ for all $i,j \in \{1,2\}$ and $Z(f_i) \cap Z(g_j) = \phi$ for all $i,j \in \{1,2\}$, then by (1), (2) and (3),  $c([f_1,f_2],[g_1,g_2]) > 5$. By corollary 2.2, there exists a vertex $h \in  V({\Gamma}(C(X)))$ such that $h$ is adjacent to both $f_1$ and $g_1$ in ${\Gamma} (C(X))$. Consider, $r \in \mathbb{R} \setminus \{0\}$ then we get a cycle $[f_1,f_2] - [f_1,h] - [h,g_1] - [g_1,g_2] - [g_1,rh] - [rh,f_1] - [f_1,f_2]$ of length $6$ containing $[f_1,f_2]$ and $[g_1,g_2] $. Therefore, $c([f_1,f_2],[g_1,g_2]) = 6$. 
	\end{proof}
	
	\begin{theorem}
		The graph $L({\Gamma}(C(X))) $ is never chordal.
	\end{theorem}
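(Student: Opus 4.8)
The plan is to prove non-chordality by exhibiting a chordless cycle of length greater than $3$; the cleanest such witness is an induced $4$-cycle (a chordless $C_4$) in $L({\Gamma}(C(X)))$. The guiding observation I would use is that \emph{any} $K_4$ in ${\Gamma}(C(X))$ forces an induced $4$-cycle in the line graph: if $v_1, v_2, v_3, v_4$ are four pairwise-adjacent vertices, then the four edges $[v_1,v_2], [v_2,v_3], [v_3,v_4], [v_4,v_1]$ form a $4$-cycle in $L({\Gamma}(C(X)))$, and its only two possible chords join $[v_1,v_2]$ to $[v_3,v_4]$ and $[v_2,v_3]$ to $[v_4,v_1]$. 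Each of these candidate chords is a non-edge, because $\{v_1,v_2\} \cap \{v_3,v_4\} = \phi$ and $\{v_2,v_3\} \cap \{v_4,v_1\} = \phi$, so the line-graph adjacency rule gives no edge. Hence the perimeter $4$-cycle of the $K_4$ is induced.

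First I would manufacture such a $K_4$ uniformly for every admissible $X$. Since $|X| > 1$, complete regularity supplies a nonzero non-unit $f \in C(X)$ with $Z(f) \neq \phi$ (for instance a function vanishing at one point of $X$ but not at a second). Then $f, 2f, 3f, 4f$ are four distinct vertices of ${\Gamma}(C(X))$, and because $Z(kf) = Z(f)$ for every $k \neq 0$, any two of them satisfy $Z(if) \cap Z(jf) = Z(f) \neq \phi$. Therefore $f, 2f, 3f, 4f$ are pairwise adjacent, i.e.\ they span a $K_4$ supported on the single zero-set $Z(f)$.

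Next I would assemble the cycle and verify it is induced. The four pairs $[f,2f], [2f,3f], [3f,4f], [4f,f]$ are vertices of $L({\Gamma}(C(X)))$, and consecutive pairs share a common function, so
$$[f,2f] - [2f,3f] - [3f,4f] - [4f,f] - [f,2f]$$
is a $4$-cycle. Since the four scalar multiples are distinct, the diagonal pairs satisfy $\{f,2f\} \cap \{3f,4f\} = \phi$ and $\{2f,3f\} \cap \{4f,f\} = \phi$, so by the adjacency rule neither diagonal is an edge of $L({\Gamma}(C(X)))$. Thus this $4$-cycle has no chord, and $L({\Gamma}(C(X)))$ is not chordal.

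I do not expect a serious obstacle here; the argument is short and essentially combinatorial once the right $K_4$ is in hand. The only points that need care are confirming that the four scalar multiples are genuinely distinct vertices (which fails only for $f = 0$, excluded by taking $f$ nonzero) and checking that the two diagonals really are non-adjacent in the line graph. It is worth emphasizing the contrast with the earlier chordality result for ${\Gamma}(C(X))$ itself: although ${\Gamma}(C(X))$ is chordal whenever $|X| \leq 3$, passing to the line graph destroys this property in all cases, the obstruction arising already from a single $K_4$ carried on one zero-set.
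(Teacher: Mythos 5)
Your proof is correct and takes essentially the same approach as the paper: the paper's proof exhibits the chordless $4$-cycle $[f,3f]-[3f,5f]-[5f,7f]-[7f,f]-[f,3f]$ built from scalar multiples of a single vertex $f$, exactly your ``$K_4$ on one zero-set'' construction with the multiples $f,2f,3f,4f$ replaced by $f,3f,5f,7f$. If anything, your version is slightly more careful, since you explicitly require $f \neq 0$ (needed for the scalar multiples to be distinct), a hypothesis the paper leaves implicit.
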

	
	\begin{proof}
		Let $f \in V({\Gamma}C(X))$. Then $[f,3f] - [3f,5f] - [5f,7f] - [7f,f] - [f,3f]$ is a chordless cycle of length $4$ in $L({\Gamma}C(X)) $. Hence, $L({\Gamma}C(X)) $ is never chordal.
	\end{proof}


\begin{thebibliography}{100}	
		\bibitem{Gillman}
		L. Gillman and M. Jerison. Rings of continuous functions. Van. Nostrand Reinhold, New York, 1960.
		
		\bibitem{Amini}
		A. Amini, B. Amini, E. Momtahan, and M.H. Shirdareh Haghighi, On a graph of ideals. $\textit{Acta Math. Hungar., }$ Vol.134, No. 3, (2011), 369-384.
		
		\bibitem{Azarpanah}
		F. Azarpanah and M. Motamedi. Zero divisor graph of C(X). $\textit{Acta Math. Hungar., }$ Vol.108, No. 1-2, (2005), 25-36.
		
		\bibitem{Badie}
		M. Badie. Comaximal graph of C(X). $\textit{Comment. Math. Univ. Carolin.,}$ Vol.57, No. 3, (2016), 353-364.
		
		\bibitem{Barati}
		M. Afkhami, Z. Barati, K. Khashyarmanesh, When the comaximal and zero-divisor graphs are ring graphs and outerplanar, $\textit{Rocky Mountain J. Math.}$ Vol. 44, No. 6, (2014), 1745-1761.
		
		\bibitem{Afk}
		M. Afkhami, K. Khashyarmanesh, On the cozero-divisor graphs and comaximal graphs of commutative rings, $\textit{ J. Algebra Appl.}$ Vol. 12, No. 3, (2013), 1250173, 9pp.
		
		\bibitem{Habibi}
		S. Akbari, M. Habibi, A. Majidinya, R. Manaviyat, A note on comaximal graph of non-commutative rings,  $\textit{ Algebr. Represent. Theory }$ Vol.16, No. 2, (2013), 303–307.
		
		\bibitem{Dheena}
		P. Dheena, B. Elavarasan, On comaximal graphs of near-rings, $\textit{ Kyungpook Math. J. }$, Vol.49 , No. 2, (2009), 283–288.
		
		\bibitem{Jinnah}
		M.I. Jinnah , Sh.C. Mathew , When is the comaximal graph split?,  $\textit{ Comm. Algebra }$,  Vol.40, No. 7, (2012), 2400–2404.
		
		\bibitem{Wilson}
		Wilson, J. Robin. Introduction to graph theory, Pearson Education India, 1979.
		
		\bibitem{Bose}
		B. Bose, A. Das. Graph theoretic representation of rings of continuous functions, $\textit{ Filomat}$, Vol.34, No. 2, (2020), 3417-28.
		
		\bibitem{Beck}
		I. Beck, Coloring of commutative rings,$ \textit{ J. Algebra}$, Vol.116, (1988), 208-226.
		
		
		\bibitem{Salimi}
		H.R. Maimani, M. Salimi, A. Sattari, S. Yassemi, Comaximal graph of commutative rings, $ \textit{ J. Algebra}$
		Vol.319, No. 4, (2008), 1801–1808.
		
		\bibitem{Anderson}
		D.D. Anderson, M. Naseer, Beck’s coloring of a commutative ring, $\textit{J. Algebra}$, Vol.159, (1993), 500–514.
		
		\bibitem{Maimani}
		H.R. Maimani, M.R. Pournaki , A. Tehranian, S. Yassemi, Graphs attached to rings revisited, $\textit{Arab. J. Sci. Eng.}$, Vol.36, No. 6, (2011), 997–1011.
		
		
		\bibitem{Sharma}
		P.K. Sharma, S.M. Bhatwadekar, A note on graphical representation of rings, $\textit{J. Algebra}$, Vol.176, No. 1, (1995), 124-127. 
		
		\bibitem{Mehdi}
		E. Mehdi-Nezhad, A.M. Rahimi, Dominating sets of the comaximal and ideal-based zerodivisor graphs of commutative rings,  $\textit{Quaest. Math.}$  Vol. 38, (2015), 1–17.
		
		\bibitem{Moco}
		S.M. Moconja, C.Z. Petrovi, On the structure of comaximal graphs of commutative rings with identity, $\textit{Bull. Aust. Math. Soc. }$ Vol. 83, No. 1, (2011), 11–21.
		
		\bibitem{Lee}
		Lee, Pei-Feng. Line graph of zero divisor graph in commutative rings. Diss. MS thesis, Colorado Christian University, 2007.
		
		\bibitem{Petro}
		Z.Z. Petrovic,  S.M. Moconja, On graphs associated to rings, $\textit{Novi Sad J. Math.}$ Vol. 38, No. 3, (2008), 33–38.
		
		\bibitem{Wang}
		H.J. Wang, Co-maximal graph of non-commutative rings, $\textit{Linear Algebra Appl.}$ Vol. 430, No. 2, (2009), 633–641.
		
		\bibitem{Ye}
		M. Ye, T. Wu , Q. Liu, H. Yu, Implements of graph blow-up in co-maximal ideal graphs, $\textit{Comm. Algebra}$ Vol.42, No. 6, (2014), 2476–2483.
		
		\bibitem{Ezeh}
		H. Al-Ezeh, Pure Ideals in Commutative reduced Gelfand rings with unity, $\textit {Archiv der Mathematik}$ Vol. 53, (1989), 266-269.
		
	\end{thebibliography}
	\end{document}